\newtheorem {theo} {\bfseries Theorem} [section]
\newtheorem {prop} [theo] {\bfseries Proposition}
\newtheorem {coro} [theo] {\bfseries Corollary}
\newtheorem {lem} [theo] {\bfseries Lemma}
\newtheorem {defi} {\bfseries Definition}[section]
\newtheorem{exam} {\bfseries Example}[section]
\newtheorem{rem}{\bfseries Remark}[section]
\numberwithin{equation}{section}
\newcommand{\R}{\mathbb R}
\def\rank{\mathrm{rank}\,}
\newcommand{\ip}[2]{\langle\,#1\, , \, #2 \, \rangle}
\begin{document}
\title{Frame Phase-retrievability and Exact phase-retrievable frames}
\author{Deguang Han}
\address{Department of Mathematics\\
University of Central Florida\\ Orlando, FL 32816}
\email{deguang.han@ucf.edu}
\author{Ted Juste}
\address{Department of Mathematics\\ University of Central Florida\\Orlando, FL 32816}
\email{tjuste@knights.ucf.edu}
\author{Youfa Li}
\address{College of Mathematics and Information Science\\
Guangxi University,  Naning, China }
\email{youfalee@hotmail.com}
\author{Wenchang Sun}
\address{School of Mathematical Sciences,\\
Nankai University,  Tianjin, China }
\email{sunwch@nankai.edu.cn}
\thanks{This work is partially supported by the NSF grant DMS-1403400}
\keywords{Frames, phase retrieval, exact phase-retrievable frames, PR-redundancy, phase-retrievable subspaces}
\subjclass[2010]{Primary 42C15, 46C05}

\date{\today}

\begin{abstract} An exact phase-retrievable frame $\{f_{i}\}_{i}^{N}$ for an $n$-dimensional Hilbert space is a phase-retrievable frame that  fails to be phase-retrievable if any
one element is removed from the frame. Such a frame could have different lengths. We shall prove that for the real Hilbert space case,  exact phase-retrievable frame of length $N$ exists for every $2n-1\leq N\leq n(n+1)/2$. For arbitrary frames  we introduce the concept of redundancy with respect to its phase-retrievability and the concept of frames with exact PR-redundancy. We investigate  the phase-retrievability by studying its maximal phase-retrievable subspaces with respect to a given frame which is not necessarily phase-retrievable. These  maximal PR-subspaces could have different dimensions. We are able to identify the one with the largest dimension,  which can be considered as a generalization of the characterization for phase-retrievable frames. In the basis case, we prove that if $M$ is a $k$-dimensional PR-subspace, then $|supp(x)| \geq k$  for every nonzero vector  $x\in M$. Moreover, if $1\leq k< [(n+1)/2]$, then a $k$-dimensional PR-subspace is maximal if and only if there exists a vector $x\in M$ such that $|supp(x) | = k$.
\end{abstract}
\maketitle

\section{Introduction}
\setcounter{figure}{0} \setcounter{equation}{0}

A finite sequence $\mathcal{F} = \{f_i \}^N_{i=1}$  of vectors in an $n$-dimensional Hilbert space $H$  is called a {\it
frame} for $H$ if there are two constants $0< C_{1}\leq C_{2}$ such that
$$C_{1}\|f\|^2 \leq \sum_{i=1}^N |\langle f,f_i  \rangle|^2 \leq C_{2}\|f\|^2 $$
holds for every $f\in H$. Equivalently, a finite sequence is a frame for $H$ if and only if it is a spanning set of $H$.  Two frames $\{f_{i}\}_{i=1}^{N}$ and $\{g_{i}\}_{i=1}^{N}$ are called similar if there exists an invertible operator $T$ such $g_{i} = Tf_{i}$ for every $i$.  For a given frame $\mathcal{F} =\{f_{i}\}_{i=1}^{N}$, the {\it spark} of $\mathcal{F}$ is the cardinality of the smallest linearly dependent subset of the frame. A {\it full-spark frame} is a frame whose spark is $n+1$, i.e., every $n$-vectors in $\mathcal{F}$ are linearly independent.

In recent years, frames have been extensively studied in the context of  the so-called phase-retrieval problem which  arises in various fields of science and engineering applications, such as X-ray crystallography, coherent diffractive imaging, optics and  many more.   The problem asks to recover a signal of interest from the magnitudes of its linear or nonlinear measurements.  For  the linear measurements with a frame $\{f_{i}\}_{i=1}^{N}$,  one wants to reconstruct $f$ from its intensity measurements  $\{|\langle f, f_{i}\rangle|\}_{i=1}^{N}$.  Clearly the intensity measurements are the same for both $f$ and $\lambda f$ for every unimodular scalar $\lambda$. Therefore the phase retrieval problem asks to recover $f$ up to an unimodular scalar. We refer to \cite{Ba1}-\cite{Wang-Xu} and the reference therein for some historic background of the problem and some recent developments on this topic.

\begin{defi} A  frame $\{f_{i}\}_{i=1}^{N}$ for  a Hilbert space $H$ is called {\it phase retrievable} if the induced quotient map $\mathcal{A}: H/\mathbb{T} \rightarrow \mathbb{R}^{N}$ defined by $\mathcal{A}(f/\mathbb{T}) = \{|\langle f, f_{i}\rangle|\}_{i=1}^{N}$ is injective, where $\mathbb{T} = \{\lambda\in \mathbb{R}: |\lambda| = 1\}$.
\end{defi}

There are few basic concepts when talking about frames or frame sequences:  signal recoverability, redundancy and the exactness of frames. The signal recoverability of a sequence $F = \{f_{i}\}$ can be measured by the space spanned by $F$,  and the redundancy of a finite frame $F = \{f_{i}\}_{i=1}^{N}$ for an $n$-dimensional Hilbert space can be measured by $N/n$.  An exact frame for a Hilbert space $H$ is a frame such that it fails to be a frame if we remove any one element from $F$. So exact frames are precisely the bases or the frames with redundancy one.  These concepts naturally lead us to the following questions when dealing with the phase-retrieval problem:  Given a frame $F = \{f_{i}\}_{1}^{N}$ (which may not be phase-retrievable). How to measure its phase-retrievability? How to measure  its redundancy with respect to the phase-retrievability and what  can be said about those phase-retrievable frames that have the exact PR-redundancy?

 Even a frame  is not phase-retrievable, it is still possible that it can be used to perform phase retrieval for some subsets of the Hilbert space. So for the purpose of theory development and practical applications,  it seems natural to investigate the subsets on which phase-retrieval can be performed with respect to a given frame (usually your favorite one but not phase-retrievable). In this paper we initiate the  study on the maximal phase-retrievable subspaces for a given frame. This consideration naturally leads to the concept of frame redundancy with respect to the phase-retrievability and the notion of exact phase-retrievable frames.  Unlike exact frames, exact phase-retrievable frames could have different lengths.  This paper will be focused on the existence problem of exact phase-retrievable frames  (or  more generally,  the frames with the exact PR-redundancy) with all the possible lengths,  and the maximal phase-retrievable subspaces of all possible dimensions.

There are two well-known necessary and sufficient conditions for phase retrievable frames(c.f. \cite{Ba2, Ba3, Ba4, Ba5, Ba6}). The first one is given in terms of the so-called ``complement property": A frame $\{f_{i}\}_{i=1}^{N}$ is said to have the {\it complement property} if for every $\Omega \subseteq \{1, ... , N\}$ we have either $\{f_{i}\}_{i\in\Omega}$ or $\{f_{i}\}_{i\in\Omega^{c}}$ spans $H$.

\begin{prop} \label{thm-1.1} The complement property is necessary for a frame to be phase-retrievable. It is also sufficient for real Hilbert spaces.
\end{prop}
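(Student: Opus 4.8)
The plan is to recast the injectivity of the quotient map $\mathcal{A}$ as a spanning condition on complementary index sets, which is precisely the complement property. The unifying device is this: whenever $f,g\in H$ satisfy $|\langle f,f_i\rangle|=|\langle g,f_i\rangle|$ for all $i$, I will track which frame vectors the two auxiliary vectors $f-g$ and $f+g$ are orthogonal to. The two assertions (necessity in general, sufficiency over $\R$) then follow from the same bookkeeping, so I would set up this correspondence first and handle the directions separately.

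For sufficiency over a real Hilbert space, take $f,g$ with equal intensity measurements. Since all inner products are real, for each $i$ we have $\langle f,f_i\rangle=\pm\langle g,f_i\rangle$; collect the indices carrying a $+$ sign into a set $\Omega$ and the remaining ones into $\Omega^{c}$. Then $f-g$ is orthogonal to $\{f_i\}_{i\in\Omega}$ while $f+g$ is orthogonal to $\{f_i\}_{i\in\Omega^{c}}$. Applying the complement property to this particular $\Omega$, one of the two families spans $H$: in the first case $f-g=0$ and in the second $f+g=0$, so $f=\pm g$, i.e.\ $f/\mathbb{T}=g/\mathbb{T}$. Hence $\mathcal{A}$ is injective. (The fact that $\pm1$ exhaust $\mathbb{T}$ in the real setting is exactly what makes the sign splitting legitimate, which is why sufficiency is confined to real spaces.)

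For necessity, valid over both $\R$ and $\mathbb{C}$, I would argue by contraposition. Suppose the complement property fails, so there is an $\Omega$ for which neither $\{f_i\}_{i\in\Omega}$ nor $\{f_i\}_{i\in\Omega^{c}}$ spans $H$. Choose nonzero $u\perp\{f_i\}_{i\in\Omega}$ and nonzero $v\perp\{f_i\}_{i\in\Omega^{c}}$, and set $f=u+v$, $g=u-v$. For $i\in\Omega$ we have $\langle f,f_i\rangle=\langle v,f_i\rangle$ and $\langle g,f_i\rangle=-\langle v,f_i\rangle$, while for $i\in\Omega^{c}$ we have $\langle f,f_i\rangle=\langle g,f_i\rangle=\langle u,f_i\rangle$; in either case the magnitudes agree, so $\mathcal{A}(f/\mathbb{T})=\mathcal{A}(g/\mathbb{T})$.

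The step I expect to be the crux is verifying $f/\mathbb{T}\neq g/\mathbb{T}$, i.e.\ that $u+v$ and $u-v$ are not unimodular multiples of one another. The key observation is that $u$ and $v$ are automatically linearly independent: if $v=cu$ with $c\neq0$, then $u$ would be orthogonal to both $\{f_i\}_{i\in\Omega}$ and $\{f_i\}_{i\in\Omega^{c}}$, hence to the whole spanning family $\{f_i\}_{i=1}^{N}$, forcing $u=0$, a contradiction. Given independence, an equation $u+v=\lambda(u-v)$ would require simultaneously $\lambda=1$ and $\lambda=-1$, which is impossible; thus $f$ and $g$ are genuinely distinct modulo $\mathbb{T}$, and the frame fails to be phase retrievable. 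This settles necessity in both the real and complex settings and completes the argument.
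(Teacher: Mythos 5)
Your proof is correct. The paper itself gives no proof of this proposition---it is quoted as a well-known result with citations to the Balan--Casazza--Edidin line of work---and your argument (for necessity, the vectors $f=u+v$, $g=u-v$ built from nonzero $u\perp\{f_i\}_{i\in\Omega}$, $v\perp\{f_i\}_{i\in\Omega^{c}}$ together with the observation that $u,v$ must be linearly independent; for sufficiency over $\R$, splitting the index set by the sign of $\langle f,f_i\rangle/\langle g,f_i\rangle$ and applying the complement property to conclude $f-g=0$ or $f+g=0$) is precisely the standard proof from that cited literature, with all the delicate points (distinctness of $f,g$ modulo $\mathbb{T}$, indices where both coefficients vanish) handled correctly.
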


The second condition is based on the rank-one operator lifting of the frame $\{f_{i}\}_{i=1}^{N}$. For each $f, g\in H$, let $f\otimes g$ be the rank-one operator defined by $(f\otimes g) x = \langle x, g\rangle f$ for every $x\in H$.
In what follows we use $\langle A, B\rangle  = tr(AB^{*})$ to denote the Hilbert-Schimidt inner product on the space of $n\times n$ matrices and $\mathcal{S}_{2}$ be the set of all the Hermitian $n\times n$ matrices with rank less than or equal to $2$. Given a sequence $\mathcal{F}=\{f_{i}\}_{i=1}^{N}$ in $H$ and let $\Theta_{L(\mathcal F)}$ be the analysis operator of $L(\mathcal{F}):=\{L(f_{i})\}$, where $L(x): = x\otimes x$.  From the definition of phase-retrievable frames, it is easy to see  get the following:

\begin{prop}
A  frame $\{f_{i}\}_{i=1}^{N}$ is phase-retrievable if and only if $$ker(\Theta_{L(\mathcal F)})\cap \mathcal{S}_{2} = \{0\}.$$
\end{prop}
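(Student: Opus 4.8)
The plan is to translate the injectivity of $\mathcal{A}$ into a statement about $\ker(\Theta_{L(\mathcal F)})$ by exploiting the rank-one lifting $L(x) = x\otimes x$. The starting point is the identity, valid for every $f\in H$ and every $i$,
\[
\langle L(f), L(f_i)\rangle = tr\big((f\otimes f)(f_i\otimes f_i)^{*}\big) = |\langle f, f_i\rangle|^2,
\]
so that $\Theta_{L(\mathcal F)}(L(f)) = \{|\langle f, f_i\rangle|^2\}_{i=1}^N$ is exactly the vector of intensity measurements. Consequently two vectors $f,g$ produce the same measurements if and only if $L(f) - L(g) \in \ker(\Theta_{L(\mathcal F)})$. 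I would also record the elementary fact that $L(f) = L(g)$ if and only if $f = \lambda g$ for some $\lambda\in\mathbb{T}$: the range of a nonzero $x\otimes x$ pins down $x$ up to a unimodular scalar. This is what makes $\mathcal{A}$ well defined on $H/\mathbb{T}$ and identifies ``same class'' with ``same lift''.

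For the direction that the kernel condition implies phase-retrievability, suppose $\ker(\Theta_{L(\mathcal F)})\cap\mathcal{S}_2 = \{0\}$ and that $f,g$ give the same measurements. Then $S := L(f) - L(g)$ is Hermitian of rank at most two, hence lies in $\mathcal{S}_2$, and by the identity above it lies in $\ker(\Theta_{L(\mathcal F)})$; therefore $S = 0$, i.e.\ $L(f) = L(g)$, and so $f = \lambda g$ with $\lambda\in\mathbb{T}$. Thus $\mathcal{A}$ is injective.

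For the converse, assume $\mathcal{A}$ is injective and take $S\in\ker(\Theta_{L(\mathcal F)})\cap\mathcal{S}_2$; the goal is $S = 0$. Since $S$ is Hermitian of rank at most two, I would use its spectral decomposition $S = a\,(u\otimes u) + b\,(v\otimes v)$ with real eigenvalues $a,b$ and orthonormal $u,v$, and then split into cases according to the signs of $a,b$. If $a$ and $b$ have opposite signs (or one of them vanishes), I may relabel so that $a\ge 0\ge b$ and set $f = \sqrt{a}\,u$, $g = \sqrt{-b}\,v$, giving $S = L(f) - L(g)$; then $f,g$ have equal measurements, injectivity forces $L(f) = L(g)$, and hence $S = 0$. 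If instead $a,b$ have the same sign, then for every $i$
\[
0 = \langle S, L(f_i)\rangle = a\,|\langle u, f_i\rangle|^2 + b\,|\langle v, f_i\rangle|^2
\]
is a sum of terms of one sign, forcing $\langle u,f_i\rangle = \langle v, f_i\rangle = 0$ for all $i$; since $\{f_i\}$ spans $H$ this gives $u = v = 0$, which is impossible for unit vectors unless $a = b = 0$, i.e.\ $S = 0$. In all cases $S = 0$, as desired.

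The step I expect to be the main obstacle is precisely this case analysis. The subtlety is that the set $\{L(f) - L(g) : f,g\in H\}$ is \emph{not} all of $\mathcal{S}_2$: it consists exactly of the Hermitian rank-$\le 2$ matrices of signature at most $(1,1)$, and it misses the (semi)definite rank-two matrices. One therefore cannot realize an arbitrary element of $\mathcal{S}_2$ as a difference of two lifts and reduce uniformly to the injectivity of $\mathcal{A}$. The definite case must instead be handled separately, invoking the spanning property of the frame together with the positivity of the intensity map; this is what rules out a nonzero (semi)definite element of the kernel and closes the argument.
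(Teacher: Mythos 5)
Your proof is correct. There is nothing in the paper to compare it against: the authors state this proposition with no argument at all (``From the definition of phase-retrievable frames, it is easy to see \dots''), so your writeup supplies exactly the details they treat as obvious. Your route is the standard one: the trace identity $\langle L(f)-L(g), L(f_i)\rangle = |\langle f,f_i\rangle|^2 - |\langle g,f_i\rangle|^2$ converts equality of intensity measurements into membership of $L(f)-L(g)$ in $\ker(\Theta_{L(\mathcal F)})$, which settles one implication and the indefinite half of the other. The point you flag as the main obstacle is indeed the only place where the statement is more than a reformulation of the definition: differences of lifts $L(f)-L(g)$ exhaust only the rank-$\le 2$ Hermitian matrices of signature at most $(1,1)$, so a (semi)definite rank-two element of the kernel cannot be excluded by injectivity of $\mathcal{A}$ alone. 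Your observation that such an element $a\,(u\otimes u)+b\,(v\otimes v)$ with $a,b$ of one sign would force $\langle u,f_i\rangle = \langle v,f_i\rangle = 0$ for every $i$, contradicting the spanning property of a frame, is precisely what closes this gap; it is also why the proposition is stated for frames rather than arbitrary sequences. One cosmetic remark: in your equal-sign case the conclusion is best phrased as a contradiction (nonzero $a,b$ of the same sign cannot occur), since the subcase $a=b=0$ already belongs to your first case; this wording issue does not affect correctness.
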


The above characterization indicates that $ker(\Theta_{L(\mathcal F)})\cap \mathcal{S}_{2}$ seems to be a good candidate to measure the phase-retrievability for a frame $\mathcal{F}$.  This motivated us to introduce the following  concept of redundancy with respect to the phase-retrievability (or PR-redundancy) and the concept of frames with the exact PR-redundancy property. Let  $\mathcal{F}=\{f_{i}\}_{i=1}^{N}$  be a frame for  $H$. For each subset $\Lambda$ of $\{1, ... , N\}$, let $\mathcal{F}_{\Lambda} = \{f_{i}\}_{i\in \Lambda}$ and use $|\Lambda |$ to denote the cardinality of $\Lambda$.

\begin{defi} Given a frame $\mathcal{F}=\{f_{i}\}_{i=1}^{N}$   for  $H$. Let $k$ be the smallest integer such that there exists a subset $\Lambda$ of $\{1, ... , N\}$ with the property  that $|\Lambda| = k$ and
$$ker(\Theta_{L(\mathcal{F}_{\Lambda})})\cap \mathcal{S}_{2}  = ker(\Theta_{L(\mathcal F)})\cap \mathcal{S}_{2} .$$ Then we call $N/k$ the {\it PR-redundancy} of $\mathcal{F}$. A frame $\mathcal{F}=\{f_{i}\}_{i=1}^{N}$   for  $H$
is said to have the {\it exact PR-redundancy property} if its PR-redundancy is $1$.
A phase-retrievable frame  with the exact PR-redundancy will be called an {\it exact  phase-retrievable frame}.
\end{defi}

Given a frame  $\mathcal{F} =\{f_{i}\}_{i=1}^{N}$ for $H$. From the above definition we have the following: (i) There exists a subset $\Lambda$ of $\{1, ... , N\}$ such that $\mathcal{F}_{\Lambda}$ is a frame for $H$ with the exact PR-redundancy property.  (ii) $\mathcal{F}$ has the exact PR-redundancy property if and only if for any proper subset $\Lambda$ of $\{1, ..., N\}$, there exist two vectors $x, y\in H$ such that
$|\langle x,  f_{j}\rangle | = |\langle y,  f_{j}\rangle | $ for every $j\in\Lambda$, but $|\langle x,  f_{i}\rangle | \neq |\langle y,  f_{i}\rangle |$ for some $i\in\Lambda^{c}$. (iii) If $\mathcal{F}$  is phase-retrievable, then it is an exact phase-retrievable frame  if and only if $\mathcal{F}_{\Lambda}$ is no longer phase-retrievable for any proper subset $\Lambda$ of $\{1, ... , N\}$.

In what follows we always assume that $H = \Bbb{R}^{n}$ and use $\mathbb{H}_{n}$ to denote the space of all the $n\times n$ Hermitian matrices.

\begin{lem} \label{lem-00}  If a frame $\mathcal{F} = \{f_{i}\}_{-1}^{N}$ for $\R^{n}$ has the exact PR-redundancy property, then $\{L(f_{i})\}_{i=1}^{N}$ is a linearly independent set (and hence $N \leq \dim \mathbb{H}_{n} = n(n+1)/2$).  The converse is false.
\end{lem}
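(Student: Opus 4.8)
The plan is to prove the forward implication by contraposition and then to settle the falsity of the converse with an explicit construction. The one structural fact I will use throughout is the identification
$ker(\Theta_{L(\mathcal{F})}) = \big(\mathrm{span}\{L(f_{i})\}_{i=1}^{N}\big)^{\perp}$,
where the orthogonal complement is taken inside $\mathbb{H}_{n}$ with respect to the Hilbert--Schmidt inner product; this is immediate from the formula $\Theta_{L(\mathcal{F})}(A) = (\langle A, L(f_{i})\rangle)_{i=1}^{N}$, since $A$ lies in the kernel exactly when it is orthogonal to every $L(f_{i})$.

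For the forward direction, suppose $\{L(f_{i})\}_{i=1}^{N}$ is linearly \emph{dependent}, so that $L(f_{j})\in\mathrm{span}\{L(f_{i}):i\neq j\}$ for some index $j$. Putting $\Lambda=\{1,\dots,N\}\setminus\{j\}$, the sets $\{L(f_{i})\}_{i=1}^{N}$ and $\{L(f_{i})\}_{i\in\Lambda}$ span the same subspace of $\mathbb{H}_{n}$, hence have the same orthogonal complement, i.e. $ker(\Theta_{L(\mathcal{F}_{\Lambda})})=ker(\Theta_{L(\mathcal{F})})$. Intersecting with $\mathcal{S}_{2}$ then gives $ker(\Theta_{L(\mathcal{F}_{\Lambda})})\cap\mathcal{S}_{2}=ker(\Theta_{L(\mathcal{F})})\cap\mathcal{S}_{2}$ with $|\Lambda|=N-1<N$, which makes the PR-redundancy at least $N/(N-1)>1$ and contradicts the exact PR-redundancy hypothesis. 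Therefore $\{L(f_{i})\}$ is linearly independent, and since it sits inside $\mathbb{H}_{n}$ we obtain $N\leq\dim\mathbb{H}_{n}=n(n+1)/2$.

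For the converse I would build a counterexample that exploits the gap between the full kernel and its intersection with $\mathcal{S}_{2}$: removing a vector whose lift is independent from the rest strictly enlarges $ker(\Theta)$, yet can leave the rank-$\leq 2$ part untouched. Working in $\R^{3}$ (so $n(n+1)/2=6$), I would take a generic full-spark phase-retrievable frame $\mathcal{G}=\{f_{1},\dots,f_{5}\}$ of the minimal length $2n-1=5$, whose lifts are linearly independent in the $6$-dimensional space $\mathbb{H}_{3}$, and then append a generic $f_{6}$ with $L(f_{6})\notin\mathrm{span}\{L(f_{1}),\dots,L(f_{5})\}$ (possible, that span being only $5$-dimensional). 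Then $\{L(f_{i})\}_{i=1}^{6}$ is a basis of $\mathbb{H}_{3}$, so $\mathcal{F}=\{f_{1},\dots,f_{6}\}$ satisfies the hypothesis; but taking $\Lambda=\{1,\dots,5\}$ the subframe $\mathcal{F}_{\Lambda}=\mathcal{G}$ is still phase-retrievable, so by the rank-lifting characterization $ker(\Theta_{L(\mathcal{F}_{\Lambda})})\cap\mathcal{S}_{2}=\{0\}=ker(\Theta_{L(\mathcal{F})})\cap\mathcal{S}_{2}$, showing the PR-redundancy exceeds $1$. The forward direction is essentially bookkeeping; the only delicate point — which I regard as the main (if minor) obstacle — is justifying that a generic minimal phase-retrievable frame in $\R^{3}$ has linearly independent lifts. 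I would dispatch this either by a direct dimension-count/genericity argument for full-spark configurations, or by invoking the existence of length-$5$ exact phase-retrievable frames, whose lifts are linearly independent by the forward implication just established.
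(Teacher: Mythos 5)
Your proof is correct and follows essentially the same route as the paper: the forward direction via the observation that a dependent lift lies in the span of the others (so dropping it leaves the kernel, hence its intersection with $\mathcal{S}_{2}$, unchanged), and the converse by extending the linearly independent lifts of a minimal-length phase-retrievable frame to a basis of $\mathbb{H}_{n}$ by rank-one lifts, yielding a frame whose lifts are independent but which contains a proper phase-retrievable subframe. The only differences are cosmetic: you specialize to $n=3$ (one counterexample suffices) where the paper treats all $n\geq 3$, and your second suggested resolution of the ``delicate point'' --- that a phase-retrievable frame of length $2n-1$ is automatically exact and hence has independent lifts by the forward implication --- is precisely the paper's own argument.
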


\begin{proof} If $\{L(f_{i})\}_{i=1}^{N}$ is linearly dependent, then there exists a proper subset $\Lambda$ of $\{1, ... , N\}$ such that $\mathrm{span}\,  \{L(f_{i}): i\in \Lambda\} = \mathrm{span}\,  \{L(f_{i}): 1\leq i\leq N\}$. This implies that $ker(\Theta_{L(\mathcal{F}_{\Lambda})}) = ker(\Theta_{L(\mathcal F)})$. Hence $\mathcal{F}$ does not have the exact PR-redundancy property. Therefore $\{L(f_{i})\}_{i=1}^{N}$ is a linearly independent  set.

Let $n\geq 3$. Then $2n-1 < n(n+1)/2$. Let $\{f_{1}, ... , f_{2n-1}\}$ be a phase-retrievable frame for $H$ which clearly must have the exact PR-redundancy property. Thus  $\{L(f_{i})\}_{i=1}^{2n-1}$ is linearly independent. Since $\dim \mathbb{H}_{n} = n(n+1)/2$ and $\mathrm{span}\,  \{L(x): x\in H\} = \mathbb{H}_{n}$, we can extend  $\{L(f_{i})\}_{i=1}^{2n-1}$  to a basis  $\{L(f_{i})\}_{i=1}^{n(n+1)/2}$.
But clearly $\mathcal{F} = \{f_{i}\}_{i=1}^{n(n+1)/2}$ does not have the exact PR-redundancy.
\end{proof}

Lemma \ref{lem-00} immediately implies the following length bound for exact phase-retrievable frames.

\begin{coro} If $\mathcal{F}=\{f_{i}\}_{i=1}^{N}$ is an exact phase-retrievable frame for $\R^{n}$, then $2n-1\leq N \leq n(n+1)/2$.
\end{coro}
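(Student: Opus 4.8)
The plan is to establish the two inequalities separately, since only the upper bound is an immediate consequence of Lemma \ref{lem-00}, while the lower bound must be extracted from the phase-retrievability hypothesis itself.

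For the upper bound $N \leq n(n+1)/2$, I would note that an exact phase-retrievable frame is, by definition, a phase-retrievable frame possessing the exact PR-redundancy property. Lemma \ref{lem-00} therefore applies directly and yields that $\{L(f_i)\}_{i=1}^{N}$ is a linearly independent set in the space $\mathbb{H}_n$ of $n\times n$ Hermitian matrices. Since $\dim \mathbb{H}_n = n(n+1)/2$, a linearly independent set can contain at most $n(n+1)/2$ elements, so $N \leq n(n+1)/2$ follows at once.

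For the lower bound $N \geq 2n-1$, I would invoke Proposition \ref{thm-1.1}: because $\mathcal{F}$ is phase-retrievable for the real space $\R^{n}$, it must satisfy the complement property. Arguing by contradiction, suppose $N \leq 2n-2$. Since $\mathcal{F}$ spans $\R^{n}$ we have $N \geq n$, so we may choose $\Omega \subseteq \{1,\dots,N\}$ with $|\Omega| = n-1$; then $|\Omega^{c}| = N-(n-1) \leq (2n-2)-(n-1) = n-1$. Both $\{f_i\}_{i\in\Omega}$ and $\{f_i\}_{i\in\Omega^{c}}$ thus consist of fewer than $n$ vectors, so neither can span $\R^{n}$. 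This contradicts the complement property, and hence $N \geq 2n-1$.

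Combining the two estimates gives $2n-1 \leq N \leq n(n+1)/2$, as claimed. I do not anticipate any genuine obstacle in this argument; the only point requiring attention is conceptual rather than technical, namely recognizing that the lower bound is not furnished by Lemma \ref{lem-00} at all but rather by the phase-retrievability assumption through the complement property of Proposition \ref{thm-1.1}.
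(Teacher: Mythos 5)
Your proof is correct and takes essentially the same route as the paper: the upper bound is precisely the paper's application of Lemma \ref{lem-00}, and your lower bound via the complement property of Proposition \ref{thm-1.1} is the standard counting argument (two subsets of at most $n-1$ vectors cannot both span $\R^{n}$) that the paper treats as known when it calls the corollary immediate. Your observation that Lemma \ref{lem-00} alone does not furnish the bound $N\geq 2n-1$ is a fair remark about the paper's terse phrasing, but it does not amount to a different approach.
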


This leads to the question about the attainable lengths for exact phase-retrievable frames. Our first main result shows that  every $N$ between $2n-1$ and $N \leq n(n+1)/2$ is attainable, i.e., there exists an exact phase-retrievable frame of length $N$ for every such $N$.

 It is known that for each $N\geq n$, the set of full-spark frames of length $N$ (i.e., every $n$ vectors in $\mathcal{F}$ are linearly independent) for an open dense subset in the direct sum space $H^{(N)} : = H \oplus ... \oplus H$, $N$-copies). It is clear that if $N > 2n-1$ and $\mathcal{F} = \{f_{i}\}_{i=1}^{N}$  has the full spark, then $N$ can not be an exact phase-retrievable frame. Therefore the  set of  exact phase-retrievable frames of length $N$ has measure zero, and so the existence proof of exact phase-retrievable frames is quite subtle, as demonstrated in section 2.

For a non-phase-retrievable frame $\mathcal{F}$, researchers have been interested in identifying the subsets of the signal space such that phase-retrieval can be performed by the frame on these subsets. A typical example is the subset of sparse signals (e.g. \cite{Eldar, Wang-Xu}). In order to have a better understanding about the phase-retrievability,    here we are interested in the problem of identifying the largest subspaces  $M$ such that $\mathcal{F}$ does the phase-retrieval for all the signals in $M$. For this purpose we introduce the following definition:

\begin{defi}  Let $\mathcal{F}=\{f_{i}\}_{i=1}^{N}$  be a frame for  $H$ and $M$ is a subspace of $H$. We say that $M$ is a {\it phase-retrievable subspace} with respect to $\mathcal{F}$ if $\{P_{M}f_{i}\}_{i=1}^{N}$ is a phase-retrievable frame for $M$, where $P_{M}$ is the  orthogonal projection  from $H$ onto $M$. A phase-retrievable subspace $M$ is called maximal if it is not a proper subspace of any other phase-retrievable subspaces with respect to $\mathcal{F}$.
\end{defi}

We will use the abbreviation ``$\mathcal{F}$-PR subspace " to denote a phase-retrievable subspace with respect to $\mathcal{F}$. Given a frame $\mathcal{F}$. Naturally we would like to know the answers to the following questions: What are possible dimensions $k$ such that there exists a $k$-dimensional  maximal $\mathcal{F}$-PR subspace? What is the largest (or the smallest)  dimension for all the maximal  $\mathcal{F}$-PR subspaces?

As an motivating example, we will show that if  $\mathcal{F} =\{f_{i}\}_{i=1}^{n}$ is a basis for $H$, then there exists a $k$-dimensional  maximal $\mathcal{F}$-PR subspace if and only if $1\leq k \leq [(n+1)/2]$, where $[a]$ denotes the integer part of $a$ . For any general frame $\mathcal{F}$, we will identify the largest $k$ such that there exists a $k$-dimensional  maximal $\mathcal{F}$-PR subspace. This leads to a generalization of Proposition \ref{thm-1.1}. In the case that $\mathcal{F} =\{f_{i}\}_{i=1}^{n}$ is an orthonormal  basis, we show that if $M$ is a $\mathcal{F}$-PR subspace, then the support $supp(x)$  (with respect to the dual basis) of every nonzero vector $x$ in $M$ has the cardinality greater than or equal to $k$. Moreover, we will prove that for any given vector $x$ with $|supp(x)| = k$, there exists a $k$-dimensional maximal $\mathcal{F}$-PR subspace $M$ containing $x$.  This support condition is also necessary in the case that  $k < [(n+1)/2]$, i.e, in this case we have that a $k$-dimensional $\mathcal{F}$-PR subspace $M$ is maximal if and only if there exists an nonzero vector $x$ in $M$ whose support has the cardinality  $k$.

The following simple property will be needed in the rest of the paper.

\begin{lem} \label{lem-02} Suppose that $H$ is the direct sum of two subspaces $X$ and $Y$. If $\mathcal{F}_{1}$ is a frame
for $X$ with the exact PR-redundancy property  and $\mathcal{F}_{2}$ is a frame for $Y$ with the exact PR-redundancy property,
then $\mathcal{F} = \mathcal{F}_{1}\cup \mathcal{F}_{2}$ is a frame for $H$ with the exact PR-redundancy property.
\end{lem}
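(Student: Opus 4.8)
The plan is to verify the two requirements separately: that $\mathcal{F}$ is a frame for $H$, and that it has the exact PR-redundancy property. The frame part is immediate, since $\mathcal{F}_1$ spans $X$, $\mathcal{F}_2$ spans $Y$, and $H=X\oplus Y$, so the union $\mathcal{F}=\mathcal{F}_1\cup\mathcal{F}_2$ spans $H$. For the exact PR-redundancy I would work through characterization (ii): it suffices to show that for \emph{every} proper subset $\Lambda\subseteq\{1,\dots,N\}$ there is a witness pair $x,y\in H$ with $|\ip{x}{f_j}|=|\ip{y}{f_j}|$ for $j\in\Lambda$ but differing in modulus on some element indexed by $\Lambda^{c}$. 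My first reduction is that it is enough to produce, for each single index $i$, a pair $(x_i,y_i)$ with $|\ip{x_i}{f_j}|=|\ip{y_i}{f_j}|$ for all $j\neq i$ and $|\ip{x_i}{f_i}|\neq|\ip{y_i}{f_i}|$: given an arbitrary proper $\Lambda$, pick any $i\in\Lambda^{c}$; then $\Lambda\subseteq\{1,\dots,N\}\setminus\{i\}$, so $(x_i,y_i)$ already agrees on $\Lambda$ and differs on $i\in\Lambda^{c}$, serving as a witness for $\Lambda$.

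Taking the direct sum to be orthogonal (the intended reading, so $Y=X^{\perp}$), I would build these single-index witnesses directly from the hypotheses on the summands. Suppose $i$ indexes an element $g\in\mathcal{F}_1\subseteq X$. Applying characterization (ii) to $\mathcal{F}_1$ on $X$ with the proper subset consisting of all indices except that of $g$, the exact PR-redundancy of $\mathcal{F}_1$ supplies $x_0,y_0\in X$ with $|\ip{x_0}{g'}|=|\ip{y_0}{g'}|$ for every $g'\in\mathcal{F}_1\setminus\{g\}$ and $|\ip{x_0}{g}|\neq|\ip{y_0}{g}|$. I claim $(x_0,y_0)$ is already a witness for $\mathcal{F}$ at $g$: on the remaining elements of $\mathcal{F}_1$ the moduli agree by construction, while on every $h\in\mathcal{F}_2\subseteq Y=X^{\perp}$ we have $\ip{x_0}{h}=\ip{y_0}{h}=0$ because $x_0,y_0\in X$, so the moduli agree (both vanish), and they still differ on $g$. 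The case where $i$ indexes an element of $\mathcal{F}_2$ is symmetric with $X$ and $Y$ interchanged. Combining the two cases yields a single-index witness for every $i$, which by the reduction above gives witnesses for all proper subsets, so $\mathcal{F}$ has the exact PR-redundancy property.

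The step that needs the most care, and the only genuine obstacle, is the orthogonality of the decomposition: it is precisely what forces $\ip{x_0}{h}=\ip{y_0}{h}=0$ and thereby makes a witness from one summand invisible to the frame elements of the other. If one wants the statement for a merely algebraic direct sum, I would reduce to the orthogonal case using that PR-redundancy is a similarity invariant: for invertible $S$ the map $T\mapsto S^{*}TS$ is a rank-preserving linear bijection of $\mathbb{H}_n$ carrying $ker(\Theta_{L(S\mathcal{F}_\Lambda)})\cap\mathcal{S}_2$ onto $ker(\Theta_{L(\mathcal{F}_\Lambda)})\cap\mathcal{S}_2$ for every $\Lambda$, so choosing $S$ to send bases of $X$ and $Y$ to an orthonormal basis aligned with the splitting reduces the general case to the orthogonal one. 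Alternatively the same argument runs through the operator formulation behind Lemma \ref{lem-00}: take a rank-$\le 2$ Hermitian witness $A$ on $X$ (vanishing on $\{g'\otimes g'\}_{g'\neq g}$ but not on $g\otimes g$) and extend it by zero on $Y$ to $T=A\oplus 0$, which stays Hermitian of rank $\le 2$ on $H$ with $\ip{Th}{h}=0$ for all $h\in Y$; I expect the verification to be routine either way.
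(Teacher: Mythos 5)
Your proposal is correct and takes essentially the same route as the paper's own proof: reduce to the orthogonal case by similarity, check only single-element removals, and observe that a witness produced inside one summand is automatically invisible to the frame vectors of the other summand by orthogonality. The only difference is presentational --- the paper phrases the witness as the operator $A = u\otimes u - v\otimes v$ with $u, v \in X$ lying in $ker(\Theta_{L(\mathcal{F}_{1}\setminus\{f\})})$ but not in $ker(\Theta_{L(\mathcal{F}_{1})})$, rather than as the vector pair of characterization (ii), and your closing remark about extending a rank-$\le 2$ Hermitian witness by zero is exactly that formulation.
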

\begin{proof} By passing to a similar frame we can assume that $Y = X^{\perp}$.  Clearly $\mathcal{F}$ is a frame
for $H$. Now assume that  a vector $f$ is removed from $\mathcal{F}_{1}$. Since $\mathcal{F}_{1}$ is a frame for $X$ with
the exact PR-redundancy property,  there exists some nonzero operator $A = u\otimes u - v\otimes v$ with $u, v\in X$
such that $A\in ker (\Theta_{L(\mathcal{F}_{1}\setminus \{f\})})$ and $A\notin ker (\Theta_{L(\mathcal{F}_{1})})$.
Since $Y\perp X$, we also have $A\in ker (\Theta_{L(\mathcal{F}_{2})})$. This implies that
$A\in ker (\Theta_{L(\mathcal{F}\setminus \{f\})})$ and $A\notin ker (\Theta_{L(\mathcal{F})})$.
The same argument works if we remove one element from $\mathcal{F}_{2}$. Thus $\mathcal{F}$ has the exact PR-redundancy property.

\end{proof}

\section{Exact Phase-retrievable Frames}

In this section we prove the existence theorem for exact phase-retrievable frames of length $N$ with $2n-1\leq N \leq n(n+1)/2$.

\begin{theo} \label{main-thm-1} For every integer $N$ with $2n-1\leq N \leq n(n+1)/2$, there exists an exact phase-retrievable frame of length $N$.
\end{theo}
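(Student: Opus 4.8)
The plan is to dispose of the two endpoints of the range by explicit frames and to fill in the interior by an induction on the dimension $n$. For the minimal length $N=2n-1$ I would take any full-spark frame $\mathcal{F}=\{f_i\}_{i=1}^{2n-1}$; it has the complement property (for any partition into $\Omega$ and $\Omega^c$ one block has at least $n$ vectors, and full spark forces those to span), so by Proposition \ref{thm-1.1} it is phase-retrievable, and since $2n-1$ is the minimal length at which phase retrieval is possible in $\R^n$, deleting any single vector leaves $2n-2$ vectors and destroys phase-retrievability. Hence every phase-retrievable frame of length $2n-1$ is automatically exact, which settles the bottom of the range.

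For the top length $N=n(n+1)/2$ I would use the explicit frame
$$\mathcal{B}_n=\{e_i:1\le i\le n\}\cup\{e_i+e_j:1\le i<j\le n\}.$$
Its lifts $e_i\otimes e_i$ and $(e_i+e_j)\otimes(e_i+e_j)$ produce all $e_i\otimes e_i$ together with all $e_i\otimes e_j+e_j\otimes e_i$, hence span $\mathbb{H}_n$; therefore $\ker(\Theta_{L(\mathcal{B}_n)})=\{0\}$ and $\mathcal{B}_n$ is phase-retrievable. For exactness one computes the one-dimensional orthogonal complement left after each deletion: removing $e_i+e_j$ leaves $\R(e_i\otimes e_j+e_j\otimes e_i)$, which has rank $2$, while removing $e_i$ leaves the complement spanned by $2\,e_i\otimes e_i-\sum_{l\ne i}(e_i\otimes e_l+e_l\otimes e_i)$, whose range lies in $\mathrm{span}\{e_i,\sum_{l\ne i}e_l\}$ and so again has rank $2$. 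In both cases a nonzero element of $\mathcal{S}_2$ re-enters the kernel, so phase-retrievability fails and $\mathcal{B}_n$ is exact.

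For the interior $2n-1<N<n(n+1)/2$ I would induct on $n$, assuming $\R^{n-1}$ carries exact phase-retrievable frames of every length $M$ with $2(n-1)-1\le M\le (n-1)n/2$. Writing $\R^n=\R^{n-1}\oplus\R e_n$ with $\R^{n-1}=\mathrm{span}\{e_1,\dots,e_{n-1}\}$, I would adjoin to an exact frame $\mathcal{G}=\{g_i\}$ of $\R^{n-1}$ a block of $a$ vectors $h_1,\dots,h_a$ each with nonzero $e_n$-component, chosen so that the enlarged family is again exact phase-retrievable (Lemma \ref{lem-02} is the natural bookkeeping device for the exact PR-redundancy carried on the $\R^{n-1}$ summand). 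Expanding a Hermitian matrix in block form $A=\left(\begin{smallmatrix} A' & b\\ b^{T} & c\end{smallmatrix}\right)$, the orthogonality conditions against the lifts decouple into $A'\perp\mathrm{span}\{L(g_i)\}$ together with $a$ scalar equations from the $h_j$. Taking $a=n$ and letting $M$ range yields frames of length $M+n$ covering all of $[3n-3,\,n(n+1)/2]$ (the extreme $M=(n-1)n/2$ recovering exactly $\mathcal{B}_n$), and the short interior lengths $[2n-1,\,3n-4]$ would be produced from the minimal frame by a complementary local modification.

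The main obstacle is that phase-retrievability and exactness pull in opposite directions, which is precisely why the exact phase-retrievable frames form a measure-zero set: phase-retrievability demands that no nonzero rank-$\le 2$ matrix be orthogonal to all the lifts, while exactness demands that such a matrix appear the instant any single lift is deleted, so one must control the rank-$\le 2$ locus inside the orthogonal complement of the lift span. Two concrete points embody the difficulty in the augmentation above. First, the block matrices $\left(\begin{smallmatrix} 0 & b\\ b^{T} & c\end{smallmatrix}\right)$ automatically have $\rank\le 2$, and they lie in the complement exactly when the $a$ equations $2u_j^{T}b+t_jc=0$ admit a nonzero solution; this forces $a\ge n$ for a subspace-preserving augmentation to be phase-retrievable at all, which is why that move reaches only the upper portion of the range and the lower lengths need a separate "remove-one/adjoin-two" splitting construction. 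Second, the rank-$2$ witness $A_i$ certifying exactness of $\mathcal{G}$ after deleting $g_i$ satisfies $\langle A_i,L(h_j)\rangle=u_j^{T}A_iu_j$, which need not vanish, so the $W$-parts $u_j$ of the new vectors must be chosen compatibly (or $A_i$ corrected by a rank-preserving adjustment) to keep the witness orthogonal to the adjoined lifts. Verifying that a single choice of $h_1,\dots,h_a$ simultaneously kills all rank-$\le 2$ elements for the full frame yet reinstates one after every deletion, across the whole interior, is the technical heart of the theorem.
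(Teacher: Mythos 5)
Your two endpoint constructions are sound. The full-spark argument for $N=2n-1$ is exactly the paper's opening move, and your explicit frame $\mathcal{B}_n=\{e_i\}_{i=1}^{n}\cup\{e_i+e_j\}_{i<j}$ for $N=n(n+1)/2$ checks out: the lifts form a basis of $\mathbb{H}_n$, and after deleting $e_i+e_j$ (resp.\ $e_i$) the one-dimensional orthogonal complement is spanned by $e_i\otimes e_j+e_j\otimes e_i$ (resp.\ $2e_i\otimes e_i-\sum_{l\neq i}(e_i\otimes e_l+e_l\otimes e_i)$), which has rank $2$, so phase-retrievability fails by Proposition \ref{thm-1.1}'s rank-two criterion. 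That explicit top-length frame is a nice touch the paper does not have, since the paper treats all lengths $2n\le N\le n(n+1)/2$ uniformly by a randomized construction.

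However, the interior lengths $2n\le N\le n(n+1)/2-1$ --- the actual content of the theorem --- are not proved. Two concrete gaps. First, your augmentation step (adjoin $a=n$ vectors with nonzero $e_n$-component to an exact PR frame of $\R^{n-1}$ and retain exact phase-retrievability) is never verified; you yourself identify the simultaneous requirements (no nonzero rank-$\le 2$ matrix orthogonal to all lifts, yet one appearing after every single deletion) as ``the technical heart'' and leave it open. This is precisely where the paper does its work: it constructs random matrices with a controlled zero pattern, properties (P1)--(P5), where (P3) (exactly $n$ nonzero entries in each row) makes exactness automatic via a complement-property failure after any deletion, and the rank lemmas (Lemmas \ref{Lm:L1}--\ref{Lm:L3}) give phase-retrievability almost surely. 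Second, even granting your augmentation, a block of size $n$ only reaches lengths in $[3n-3,\,n(n+1)/2]$, and the missing lengths $2n\le N\le 3n-4$ (nonempty for every $n\ge 4$) are dismissed with an unspecified ``complementary local modification.'' This gap is exactly why the paper runs \emph{three} augmentations of different sizes ($+2$, $+n$, $+n+1$ columns, Steps I--III of its proof) and checks that the shifted intervals tile the full new range for $n\ge5$, with $n=3,4,5$ handled directly; the $+2$ step (Step III) is the most delicate and has no counterpart in your sketch. Finally, Lemma \ref{lem-02} cannot do the bookkeeping you assign it: it concerns unions $\mathcal{F}_1\cup\mathcal{F}_2$ supported on two complementary subspaces, and such a union never has the complement property for the whole space, so it preserves exact PR-redundancy but never yields phase-retrievability (that is why the paper invokes it only in Theorem \ref{main-thm-3}); moreover your $h_j$ are not contained in $\R e_n$, so the lemma's hypotheses fail anyway.
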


Before giving a proof for the above theorem, we introduce some preliminary results.
We use the following notations for matrices: $A(I,J)$ is the submatrix of $A$ consisting of the entries with row indices in $I$ and column indices in $J$.
$A(:,J) = A(\{1,\ldots,n\}, J)$ and $A(i,j) = A(\{i\}, \{j\})$

\begin{lem}\label{Lm:L1}
Let $f(x_1,\ldots,x_n)$ be a polynomial
and  $a_i$ be independent continuous random variables.
Then $f(a_1,\ldots,a_n) \ne 0$ almost surely.
\end{lem}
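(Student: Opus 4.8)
The plan is to reduce the probabilistic statement to a purely geometric fact: the zero set of a nonzero polynomial on $\R^n$ is a Lebesgue-null set. (I read the hypothesis as requiring $f$ to be a \emph{nonzero} polynomial; otherwise the conclusion is false, since the identically-zero polynomial vanishes everywhere.) Granting this, the conclusion follows immediately. Each $a_i$ being a \emph{continuous} random variable means its distribution is absolutely continuous with respect to one-dimensional Lebesgue measure, and independence makes the joint law of $(a_1,\dots,a_n)$ the product measure, hence absolutely continuous with respect to $n$-dimensional Lebesgue measure. Therefore any event whose underlying subset of $\R^n$ has Lebesgue measure zero has probability zero; applying this to $\{x : f(x) = 0\}$ gives $P(f(a_1,\dots,a_n) = 0) = 0$, that is, $f(a_1,\dots,a_n) \ne 0$ almost surely.

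The heart of the argument is thus to prove that $Z_f := \{x \in \R^n : f(x) = 0\}$ has Lebesgue measure zero whenever $f \not\equiv 0$, and I would do this by induction on $n$. For $n = 1$ a nonzero polynomial has only finitely many roots, so $Z_f$ is finite and null. For the inductive step I would write $f$ as a polynomial in the last variable, $f(x', x_n) = \sum_{k=0}^{d} g_k(x') x_n^{k}$ with $x' = (x_1,\dots,x_{n-1})$, and let $m$ be the largest index with $g_m \not\equiv 0$. Since $g_m$ is a nonzero polynomial in $n-1$ variables, the induction hypothesis gives that $Z_{g_m} \subseteq \R^{n-1}$ is null. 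For every fixed $x' \notin Z_{g_m}$ the one-variable polynomial $x_n \mapsto f(x', x_n)$ has nonzero leading coefficient $g_m(x')$, hence at most $m$ roots, so its zero slice is finite.

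To conclude I would apply Tonelli's theorem to the indicator of $Z_f$. Splitting $Z_f$ according to whether $x' \in Z_{g_m}$ or not, the part lying over $Z_{g_m}$ is contained in $Z_{g_m} \times \R$, which is null because $Z_{g_m}$ is null; and the part lying over the complement contributes
$$ \int_{\R^{n-1} \setminus Z_{g_m}} \lambda_1\big(\{x_n : f(x', x_n) = 0\}\big)\, dx' = 0, $$
since each inner slice is finite and hence $\lambda_1$-null. Adding the two pieces shows $Z_f$ is null, completing the induction. The only genuinely delicate point is the bookkeeping in this measure-theoretic step — in particular isolating the exceptional fiber set $Z_{g_m}$ where the leading coefficient degenerates so that the remaining fibers carry a polynomial of honest degree $m$ — together with the initial observation that the lemma tacitly assumes $f$ is not the zero polynomial.
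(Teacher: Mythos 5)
Your proof is correct, and it is worth noting that the paper gives no proof at all to compare against: its ``proof'' of this lemma is the single sentence that the conclusion follows by induction on $n$, with all details omitted. Your write-up supplies exactly the missing content, and your induction (peel off the last variable, isolate the top nonzero coefficient $g_m$, use the inductive hypothesis on $Z_{g_m}$, and integrate out the finite fibers with Tonelli) is the standard way to make that hint rigorous. You are also right to flag the tacit hypothesis $f\not\equiv 0$, which the paper never states but certainly intends. One refinement to consider: your reduction to ``$Z_f$ is Lebesgue-null'' requires reading ``continuous random variable'' as \emph{absolutely} continuous, since a merely nonatomic law (e.g.\ Cantor-distributed entries) can be singular and then concentration on a Lebesgue-null set is no obstruction. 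The lemma is still true in that weaker setting, but one should then run the induction probabilistically rather than geometrically: conditioning on $a_1,\dots,a_{n-1}$, the inductive hypothesis gives $g_m(a_1,\dots,a_{n-1})\ne 0$ almost surely, so the slice polynomial $x_n\mapsto f(a_1,\dots,a_{n-1},x_n)$ has finitely many roots, and the independent nonatomic variable $a_n$ avoids any finite set almost surely. Under the density reading of ``continuous,'' which is almost certainly what the authors intend for their random matrices, your argument is complete and sound as written.
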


\begin{proof}
The conclusion can be proved by induction on $n$ and we omit the details.
\end{proof}

\begin{lem}\label{Lm:L2}
Let $A$ be an $n\times m$ random matrix such that $\rank(A) = r$ almost surely.
Let $B$ be an $(n+1)\times (m+1)$ matrix such that $B(1..n,1..m) = A$
and $B(n+1,m+1)$ is a continuous random variable which is independent of the entries of $A$.
Then we have $\rank(B) \ge r+1$ almost surely.
\end{lem}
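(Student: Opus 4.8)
The plan is to exhibit, almost surely, a nonsingular $(r+1)\times(r+1)$ submatrix of $B$; since the existence of such a submatrix is exactly the assertion $\rank(B)\ge r+1$, this suffices. Note first that $A=B(1..n,1..m)$ is a submatrix of $B$, so $\rank(B)\ge \rank(A)=r$ holds always; the whole point is to gain the one extra unit of rank contributed by the fresh corner entry $t:=B(n+1,m+1)$.

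Since $\rank(A)=r$ almost surely, almost surely there is a pair of index sets $I_0\subseteq\{1,\dots,n\}$ and $J_0\subseteq\{1,\dots,m\}$ with $|I_0|=|J_0|=r$ and $\det A(I_0,J_0)\ne 0$. I would then look at the $(r+1)\times(r+1)$ submatrix of $B$ on the rows $I_0\cup\{n+1\}$ and the columns $J_0\cup\{m+1\}$, which has the block form
$$M=\begin{pmatrix} A(I_0,J_0) & c\\ d^{\top} & t\end{pmatrix},$$
with $c=B(I_0,m+1)$ and $d^{\top}=B(n+1,J_0)$. Expanding $\det M$ along its last row shows that $\det M = t\,\det A(I_0,J_0)+g$, where $g$ gathers the terms not containing $t$ and therefore depends only on $A$, $c$, and $d$. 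Thus $t\mapsto\det M$ is affine with leading coefficient $\det A(I_0,J_0)$, which is nonzero on the event we are considering.

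To close the argument I would condition on everything except the corner, i.e. on $A$, $c$, and $d$. On the event that $\det A(I_0,J_0)\ne 0$, the affine map $t\mapsto\det M$ has the single root $t^{*}=-g/\det A(I_0,J_0)$, a value determined by the conditioning data alone. Because $t$ is continuous and independent of that data, $\mathbb{P}(t=t^{*}\mid A,c,d)=0$, so $\det M\ne 0$ almost surely. To avoid any measurability issue coming from the fact that the location $(I_0,J_0)$ of the nonsingular minor is itself random, I would run this for each of the finitely many fixed index pairs $(I,J)$ of size $r$: for each such pair the event $\{\det A(I,J)\ne 0,\ \det M_{I,J}=0\}$ has probability zero, and the finite union of these null events is null. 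Off this union, any index pair witnessing $\rank(A)=r$ automatically witnesses $\rank(B)\ge r+1$.

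The main obstacle to watch is that one cannot simply invoke Lemma~\ref{Lm:L1} on a determinant built from $B$: the entries of $A$ are constrained to produce rank exactly $r$, so they are not independent continuous variables and the relevant minors of $A$ are genuinely degenerate. The resolution is precisely to isolate the one fresh continuous coordinate $t$ and argue through it alone, treating $A$, $c$, $d$ as conditioned (hence fixed) data; this is why only the continuity and the independence of $t$ enter, and it is the point at which one relies on $t$ being independent not merely of $A$ but of the other newly adjoined entries $c$ and $d$ as well.
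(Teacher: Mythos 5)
Your proof is correct and follows essentially the same route as the paper: the paper partitions the sample space according to which $r\times r$ minor of $A$ is invertible (your finite union over the fixed index pairs $(I,J)$ is the same device), then adjoins the last row and column and observes that the resulting $(r+1)\times(r+1)$ determinant, being affine in the corner entry $t$ with nonzero leading coefficient, vanishes with probability zero --- the paper compresses your explicit conditioning step into an appeal to Lemma~\ref{Lm:L1}. Your closing remark that the argument really needs $t$ independent of the adjoined entries $c$ and $d$, not merely of $A$, is a fair observation about a hypothesis left implicit in the statement; in the paper's applications all newly adjoined entries are independent continuous variables, so nothing breaks there.
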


\begin{proof}
Let $\Omega$   be the sample space.
Since $A$ has only finitely many submatrices and $\rank(A) = r$ almost surely, there is a partition $\{\Omega_i\}_{i=1}^N$ of $\Omega$
such that for each $1\le i\le N$, there is an $r\times r$ submatrix $A_i$ which is of rank $r$ almost surely on $\Omega_i$.
Therefore, the submatrix of $A$ consisting of rows and columns in $A_i$ and the $(n+1)$-th row and the $(m+1)$-th column
is of rank $r+1$ almost surely on $\Omega_i$, thanks to Lemma~\ref{Lm:L1}. This completes the proof.
\end{proof}

The following lemma can be proved similarly, which we leave to interested readers.
\begin{lem}\label{Lm:L3}
Let $A$ be an $n\times m$ random matrix such that $\rank(A) = r \le n-1$ almost surely.
Let $a$ be an $n$-dimensional vector with entries consisting of continuous independent random variables, which are also independent of the entries of $A$.
Then we have $\rank((A \ a)) = r+1$ almost surely.
\end{lem}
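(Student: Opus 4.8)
The plan is to mirror the proof of Lemma~\ref{Lm:L2}, the only new wrinkle being that the extra randomness now sits in a full column $a$ rather than in a single corner entry. First I would record the two easy bounds. Since $A$ is a submatrix of $(A\ a)$ we always have $\rank((A\ a))\ge \rank(A)=r$ almost surely, and since appending one column raises the rank by at most one we have $\rank((A\ a))\le r+1$. Hence it suffices to show $\rank((A\ a))\ge r+1$ almost surely, i.e.\ that the new column $a$ fails to lie in the column space of $A$ almost surely. This is exactly where the hypothesis $r\le n-1$ enters: it guarantees that the column space of $A$ is a proper subspace of $\R^{n}$, leaving room for $a$ to escape it.

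To make this precise I would partition the sample space exactly as in Lemma~\ref{Lm:L2}. Because $A$ has only finitely many $r\times r$ submatrices and $\rank(A)=r$ almost surely, there is a partition $\{\Omega_i\}_{i=1}^{N}$ of $\Omega$ such that on each $\Omega_i$ some fixed $r\times r$ submatrix $A(I,J)$ is nonsingular almost surely, with $|I|=|J|=r$. Fix such an $i$. Since $r\le n-1$ we may choose a row index $i_0\in\{1,\ldots,n\}\setminus I$, and I would then examine the $(r+1)\times(r+1)$ submatrix
$$
M=\begin{pmatrix} A(I,J) & a(I)\\ A(\{i_0\},J) & a(i_0)\end{pmatrix}
$$
of $(A\ a)$, obtained by selecting rows $I\cup\{i_0\}$ and columns $J\cup\{m+1\}$.

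The key computation is the cofactor expansion of $\det M$ along its last column: viewed as a function of the single entry $a(i_0)$ with all other entries held fixed, $\det M$ is affine in $a(i_0)$ with leading coefficient $\pm\det A(I,J)$, which is nonzero almost surely on $\Omega_i$. Now $a(i_0)$ is a continuous random variable independent of $A$ and of the remaining entries of $a$; conditioning on the values of $A$ and of $a(I)$, the equation $\det M=0$ forces $a(i_0)$ to equal a single prescribed value, an event of probability zero (this is the conditional form of Lemma~\ref{Lm:L1}). Integrating over the conditioning variables gives $\det M\ne 0$ almost surely on $\Omega_i$, hence $\rank((A\ a))\ge r+1$ on $\Omega_i$. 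Taking the union over the finitely many blocks $\Omega_i$ yields $\rank((A\ a))\ge r+1$ almost surely, and combined with the upper bound this gives $\rank((A\ a))=r+1$ almost surely.

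The main obstacle, and the reason the partition cannot be dispensed with, is that the entries of $A$ are \emph{not} assumed to be independent continuous random variables---only $\rank(A)=r$ almost surely is given---so Lemma~\ref{Lm:L1} cannot be applied to $\det M$ as a polynomial in all of its entries. The role of the partition is precisely to localize to an event on which a chosen minor $\det A(I,J)$ is nonzero; once we are there, a single genuinely free continuous variable, namely $a(i_0)$, is all that is required to push the determinant off zero.
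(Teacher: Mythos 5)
Your proof is correct and follows essentially the route the paper intends: the paper gives no separate argument for this lemma, saying only that it ``can be proved similarly'' to Lemma~\ref{Lm:L2}, and your write-up is precisely that adaptation---partition the sample space according to which $r\times r$ minor $A(I,J)$ is almost surely nonsingular, use the hypothesis $r\le n-1$ to choose a row index $i_0\notin I$, and apply the (conditional) one-variable case of Lemma~\ref{Lm:L1} to the determinant of the augmented $(r+1)\times(r+1)$ minor, which is affine in $a(i_0)$ with leading coefficient $\pm\det A(I,J)$. Together with the trivial bound $\rank((A\ a))\le r+1$, this yields the stated equality, so nothing is missing.
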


We are ready to give a proof of Theorem~\ref{main-thm-1}.

\begin{proof}[Proof of Theorem~\ref{main-thm-1}] Since every full-spark frame of length $2n-1$ is an exact PR-frame, we only need to prove the theorem for $2n\leq N\leq n(n+1)/2$.
First, we show that for $2n\le N\le n(n+1)/2$, there exist $n\times N$ matrices $A$ such that
\begin{enumerate}
\item [(P1)] $A$ contains the $n\times n$ identity matrix as a submatrix;
\item [(P2)] the rest $N-n$ columns of $A$ consisting of independent continuous random variables or zeros
             and each column contains at least  one $0$ and two non-zero entries;
\item [(P3)] there are exactly $n$ non-zero entries  in every row of $A$;
\item [(P4)] for each $1\le i\le n$, there exist mutually different indices $j_1$, $\ldots$, $j_n$  such that
             $a_{i,j_l}, a_{l,j_l} \ne 0$;
\item [(P5)] columns of $A$ form an exact PR frame with probability $1$.
\end{enumerate}

It is obvious that a phase-retrievable frame which satisfies (P3) is exact.
Let us explain (P4) in more details.

Fix some $i$, say, $i=1$. By (P3), there  exist mutually different indices $j_1$, $\ldots$, $j_n$  such that $a_{1,j_l}\ne 0$
for $1\le l\le n$. (P4) says that every row contains a non-zero entry in such columns and different rows correspond to different columns.

Consider the following example,
\begin{equation}\label{eq:e1}
  A = \begin{pmatrix}
     1  &0  & 0 & a_{1,4}  & a_{1,5}  & 0 \\
     0  &1  & 0 & a_{2,4}  & 0        & a_{2,3}\\
     0  &0  & 1 & 0        & a_{3,5}  & a_{3,3}
  \end{pmatrix},
\end{equation}
where $a_{i,j}$ are independent continuous random variables.
For $i=1$, set $\{j_1,j_2,j_3\} = \{1,4,5\}$. Then we have $a_{1,j_l}, a_{l,j_l} \ne 0$ for $1\le l\le 3$.

It is easy to see that $A$ satisfies (P1)$\sim$ (P5). In other words, such matrix exists for $n=3$.

Now we assume that such matrix $A$ exists for some $n$ and $N$ with $n\ge 3$.
Let us consider the case of $n+1$. We prove the conclusion in the following four steps.


(I). \textcolor{blue}{There is an $(n+1)\times (N+n+1)$ matrix satisfying (P1) $\sim$ (P5)}.

Define the $(n+1)\times (N+n)$ matrix $B$ as follows,
\[
  B = \begin{pmatrix}
         &    a_{1,N+1}& 0          &             &  0        & 0       \\
         &    0        & a_{2,N+2}  &             &  0       &  0            \\
    A    &    0        & 0          &   \cdots    &  0       & 0       \\
         &             & \ldots     &             &  &         \\
         &   0         &0           &             & a_{n,N+n}   &  0            \\
0\ldots 0& a_{n+1,N+1} &a_{n+1,N+2} &             & a_{n+1,N+n} & 1   \\
  \end{pmatrix}.
\]
where all the symbols $a_{i,j}$ are independent continuous random variables.
It is easy to see that $B$ meets (P1) $\sim$ (P4).
It remains to prove that (P5) holds for $B$.

Take some $J\subset \{1,\ldots, N+n+1\}$.
Set
\begin{eqnarray*}
 J^c &=& \{1\le j\le N+n+1:\, j\not\in J\},\\
 J|_N &=& \{j\in J:\, j\le N\}, \\
 J^c|_N &=& \{j\in J^c:\, j\le N\},
\end{eqnarray*}
Without loss of generality, we assume that
$N+n+1\in J^c$.

Suppose that $\rank(B(:, J^c))<n+1$ on some sample set  $\Omega'$  which is of positive probability.
Since $N+n+1\in J^c$,  we have
$\rank(A(:, J^c|_N))<n$ a.s. on $\Omega'$.
Consequently,
$\rank(A(:, J|_N))=n$ a.s. on $\Omega'$.

On the other hand,
Since $N+n+1\in J^c$, not all of $N+1$, $\ldots$, $N+n$ are contained in $J^c$.
 Otherwise, $\rank(B(:,J^c)) = n+1$  a.s. on $\Omega'$.
Hence there is some $1\le i\le n$ such that $N+i\in J$.
By Lemma~\ref{Lm:L2},
$\rank(B(:, J))=n+1$  a.s. on $\Omega'$.

(II). \textcolor{blue}{There is an $(n+1)\times (N+n)$ matrix satisfying (P1) $\sim$ (P5)}.

Since $A$ satisfies (P2), by rearranging columns of $A$, we may assume that $A(:,N) = (0, a_{2,N}, \ldots )^t$, where at least two entries are non-zero.
Define the $(n+1)\times (N+n)$ matrix $B$ as follows,
\[
  B = \begin{pmatrix}
         &    0        &    a_{1,N+1}& 0          &             &  0        & 0       \\
         &    a_{2,N}  &    a_{2,N+1} &0          &             &  0       &  0            \\
   \ldots&    *        &    0        & a_{3,N+2}  &   \cdots    &  0       & 0       \\
         &             &             & \ldots     &             &  &         \\
         &     *       &   0         &0           &             & a_{n,N+n-1}   &  0            \\
         & a_{n+1,N}   & a_{n+1,N+1} &a_{n+1,N+2} &             & a_{n+1,N+n-1} & 1   \\
  \end{pmatrix}.
\]
Again, we only need
 to prove that (P5) holds for $B$.

As in Step I,
we take some $J\subset \{1,\ldots, N+n\}$.
We suppose that
$N+n \in J^c$ and that $\rank(B(:$, $J^c))<n+1$  on some sample set  $\Omega'$  which is of positive probability.
Then we have $\rank(A(:$, $J|_N))=n$ a.s. on $\Omega'$.

If there is some $1\le i\le n$ such that $N+i\in J$, then we have
$\rank(B(:$, $J))=n+1$  a.s. on $\Omega'$, thanks to Lemma~\ref{Lm:L2}.

Next we assume that $N+i\in J^c$ for $1\le i\le n$.
Since $\rank(B(:$, $J^c))<n+1$  a.s. on $\Omega'$, for any $j\le N$ with $A(1,j)\ne 0$, we have $j\in J$,
thanks to Lemma~\ref{Lm:L1}. Similarly we get that $N\in J$.

By setting $i=1$ in (P4), we get mutually different $1\le j_1,\ldots,j_n \le N$ such that $A(1,j_l), A(l,j_l)\ne 0$.
Hence $j_1, \ldots, j_n\in J|_N$. Moreover, $\rank(A(:$, $\{j_1,\ldots,j_n\})) = n$  a.s. on $\Omega'$, thanks to Lemma~\ref{Lm:L1}.
Note that $N\in J|_N$ and $N\ne j_l$ for $1\le l\le n$. By Lemma~\ref{Lm:L2}, we have
\[
  \rank(B(:,\{j_1,\ldots,j_n,N\})) = n+1,\qquad  \mathrm{a.s.\, on\, } \Omega'.
\]
Hence
\[
  \rank(B(:,J)) \ge \rank(B(:,\{j_1,\ldots,j_n,N\})) = n+1,\qquad  \mathrm{a.s.\, on\, } \Omega'.
\]

(III). \textcolor{blue}{There is an $(n+1)\times (N+2)$ matrix satisfying (P1) $\sim$ (P5)}.

By rearranging columns of $A$, we may assume that
\def\labelenumi{(\arabic{enumi})}
\begin{enumerate}
\item $A(:,\{1,\ldots,n\})$ is the $n\times n$ identity matrix (P1),
\item $A(n,N)=0$ and there are at least two non-zero entries in the $N$-th column (P2),
\item $A(i,N-i), A(n,N-i)\ne 0$ for $1\le i\le n-1$ (P4).
\end{enumerate}

Define the $(n+1)\times (N+2)$ matrix $B$ as follows,
\[
  B = \begin{pmatrix}
               &         &    *           &         &     *          &   a_{1,N-1} &     *        & a_{1,N+1} &     0       \\
               &         &    *           &         &     a_{2,N-2} &     *        &     *        & a_{2,N+1} &     0            \\
I_{n\times n}  &         &    *           &         &     *         &     *        &     *        & a_{3,N+1} &     0       \\
               &    \ldots     &          &  \ldots &               &              &              & \ldots    &                      \\
               &         & a_{n-1,N-n+1}  &         &    *          &    *         &     *        &           &     0            \\
               &         & a_{n,N-n+1}    &         &    a_{n,N-2}  &  a_{n,N-1}   &     0        &a_{n,N+1}  &     0            \\
 0\ldots 0     &         & a_{n+1,N-n+1}  &         &  a_{n+1,N-2}  &  a_{n+1,N-1} &  a_{n+1,N}   &0          &  1 \\
  \end{pmatrix}.
\]
Again, we only need
 to prove that (P5) holds for $B$.

As in Step I,
take some $J\subset \{1,\ldots, N+2\}$ and suppose that
$N+2 \in J^c$ and $\rank(B(:,J^c))<n+1$  on some sample set  $\Omega'$  which is of positive probability.
Then we have $\rank(A(:,J|_N))=n$  a.s. on $\Omega'$.

There are three cases.

(i).\,\, $N+1\in J^c$

In this case, we conclude that
\begin{enumerate}
\item [(a)] $\rank(B(1..n,J^c|_N)) \le n-2$, a.s. on $\Omega'$;
\item [(b)] there is some $1\le j_0\le n-1$ such that $N-j_0\in J$.
\end{enumerate}

In fact, if there is some $\Omega''\subset \Omega'$ with positive probability such that
$\rank(B(1..n,J^c|_N))$ $= n-1$ a.s. on $\Omega''$, then we see from Lemma~\ref{Lm:L3}
that  $\rank(B(1..n,J^c|_N\cup\{N+1\})) = n$ a.s. on $\Omega''$.
By Lemma~\ref{Lm:L2}, we get $\rank(B(:,J^c)) = n+1$ a.s. on $\Omega''$, which contradicts with the assumption.
This proves (a).

On the other hand, if $N-j\in J^c$ for any $1\le j \le n-1$,
then the expansion of the determinant of $B(:,\{N-n+1,N-n+2,\ldots, N-1, N+1, N+2\})$
contains the term
$A(n,N+1) \cdot 1 \cdot \prod_{i=1}^{n-1} A(i, N-i)$, which is not zero a.s.
By Lemma~\ref{Lm:L1}, $\rank(B(:,J^c)) = n+1$ a.s. on $\Omega'$. Again, we get a contradiction  with the assumption.
Hence (b) holds.

We see from (a) and (b) that $\rank(B(1..n,J^c|_N\cup\{N-j_0\})) \le n-1$,  a.s. on $\Omega'$.
Since $A$ is a PR frame a.s., we have
$\rank(B(1..n,J|_N\setminus\{N-j_0\})) = n$ a.s. Now we see from Lemma~\ref{Lm:L2}  that
$\rank(B(:,J|_N)) = n+1$   a.s. on $\Omega'$.

(ii).\,\, $N+1 \in J$ and $N-j_0\in J$ for some $0\le j_0 \le n-1$.

Since $\rank(A(:,J|_N))=n$  a.s. on $\Omega'$,
by Lemma~\ref{Lm:L2},
\[
  \rank(B(\{1,\ldots,n\},J|_N\cup\{N+1\}\setminus\{N-j_0\})) = n,\qquad \mathrm{a.s.\, on\, } \Omega'.
\]
Using Lemma~\ref{Lm:L2} again, we get
\[
  \rank(B(:,J|_N\cup\{N+1\})) = n+1, \qquad \mathrm{a.s.\, on\, } \Omega'.
\]
Hence
\[
  \rank(B(:,J)) = n+1,\qquad \mathrm{a.s.\, on\, } \Omega'.
\]

(iii).\,\, $N+1 \in J$ and $N-j\in J^c$ for any $0\le j \le n-1$.

By (P2), there is some $1\le i_0\le n-1$ such that $A(i_0, N)\ne 0$.
Hence the expansion of the determinant of $B(:,\{N-n+1,N-n+2,\ldots, N,N+2\})$
contains the term
$B(n+1,N+2)A(n,N-i_0)A(i_0,N) \prod_{1\le i\le n-1, i\ne i_0} A(i, N-i)$, which is not zero a.s.
By Lemma~\ref{Lm:L1}, $\rank(B(:,J^c)) = n+1$ a.s. on $\Omega'$, which contradicts with the assumption.

(IV).
 \textcolor{blue}{For $2n\le N\le n(n+1)/2$, there exist $n\times N$ matrices  satisfying (P1) $\sim$ (P5)}.

Let $K_n$ be the set of all integers $k$ such that there exists an $n\times k$ matrix $A$ satisfying (P1) $\sim$ (P5).

Since $K_3 \supset \{6\}$, we see from the previous arguments that
\begin{eqnarray*}
  K_4 &\supset& \{8,9,10\}, \\
  K_5 &\supset& \{10,11,12,13,14,15\}.
\end{eqnarray*}
Hence  for $3\le n\le 5$,
\begin{equation}\label{eq:Jn}
K_n \supset \{k:\, 2n\le k\le n(n+1)/2\}.
\end{equation}

Now suppose that (\ref{eq:Jn}) is true for some $n\ge 5$.
Since $2n + (n+1) \le  n(n+1)/2 +2$ for $n\ge 5$, we have
\begin{eqnarray*}
 && \{k+2:\, 2n\le k\le n(n+1)/2\} \cup \{k+n:\, 2n\le k\le n(n+1)/2\} \\
   &&\qquad \cup \{k+n+1:\, 2n\le k\le n(n+1)/2\}\\
  &=&\{k:\, 2(n+1)\le k\le (n+1)(n+2)/2\}.
\end{eqnarray*}
Hence $K_{n+1}\supset  \{k:\, 2(n+1)\le k\le (n+1)(n+2)/2\}$. By induction,  (\ref{eq:Jn}) is true for $n\ge 3$.

Finally, since columns of a randomly generated $n\times (2n-1)$ matrix form an exact PR frame almost surely, we get
the conclusion as desired.
\end{proof}

The following are some explicit examples for $n=5$ and $10\le N\le 15$.
In each case, column vectors of $A$ form an exact PR frame.
Moreover, such matrices correspond to exact PR frames almost surely if the non-zero entries are replaced with
independent continuous random variables.

$(n,N) = (5,10)$:
\[
A=\left(
\begin{array}{rrrrrrr}
     1   &     0    &    0   &     0    &    0    \\
     0   &     1    &    0   &     0    &    0    \\
     0   &     0    &    1   &     0    &    0    \\
     0   &     0    &    0   &     1    &    0    \\
     0   &     0    &    0   &     0    &    1
\end{array}
\begin{array}{rrrrrrr}
    6    &    4     &   2     &  11     &   0    \\
   13    &   10     &   8     &   0     &   3    \\
    7    &    7     &   0     &   9     &   8    \\
   16    &    0     &   8     &  30     &  13    \\
    0    &    4     &  12     &  14     &  18
\end{array}\right).
\]

$(n,N) = (5,11)$:
\[
A=\left(
\begin{array}{rrrrrrr}
     1   &     0    &    0   &     0    &    0    \\
     0   &     1    &    0   &     0    &    0    \\
     0   &     0    &    1   &     0    &    0    \\
     0   &     0    &    0   &     1    &    0    \\
     0   &     0    &    0   &     0    &    1
\end{array}
\begin{array}{rrrrrrr}
    5    &    0  &      3   &    35  &      7    &    0   \\
   18    &    0  &     14   &    27  &      0    &    2   \\
    0    &   23  &      5   &     0  &      1    &   14   \\
    0    &    8  &      0   &    14  &      7    &   14   \\
    0    &    0  &      3   &    30  &      3    &   14
\end{array}\right).
\]

$(n,N) = (5,12)$:
\[
A=\left(
\begin{array}{rrrrrrr}
     1   &     0    &    0   &     0    &    0  &      7   \\
     0   &     1    &    0   &     0    &    0  &      4   \\
     0   &     0    &    1   &     0    &    0  &      0   \\
     0   &     0    &    0   &     1    &    0  &      0   \\
     0   &     0    &    0   &     0    &    1  &      0
\end{array}
\begin{array}{rrrrrrr}
    0    &    10    &    10    &    11     &    0   &      0    \\
    0    &     7    &    16    &     0     &   15   &      0    \\
   16    &     2    &     0    &     2     &    3   &      0    \\
    1    &     0    &    23    &     3     &    0   &      9    \\
    0    &    12    &     2    &    11     &    0   &      2
\end{array}\right).
\]

$(n,N) = (5,13)$:
\[
A=\left(
\begin{array}{rrrrrrr}
    1   &     0    &    0    &    0   &     0   &     6   &     0  \\
    0   &     1    &    0    &    0   &     0   &     6   &     0  \\
    0   &     0    &    1    &    0   &     0   &     0   &     9  \\
    0   &     0    &    0    &    1   &     0   &     0   &    16  \\
    0   &     0    &    0    &    0   &     1   &     0   &     0
\end{array}
\begin{array}{rrrrrrr}
  4   &     12  &      16    &     0    &     0   &      0       \\
  8   &      5  &       0    &     0    &    15   &      0       \\
  5   &      0  &       0    &    11    &    12   &      0       \\
  0   &      6  &       1    &     0    &     0   &      8       \\
  7   &      6  &       0    &    10    &     0   &      9
\end{array}\right).
\]

$(n,N) = (5,14)$:
\[
A=\left(
\begin{array}{rrrrrrr}
     1     &  0   &     0  &      0  &      0  &    11     &   0  \\
     0     &  1   &     0  &      0  &      0  &     5     &   0  \\
     0     &  0   &     1  &      0  &      0  &     0     &   3  \\
     0     &  0   &     0  &      1  &      0  &     0     &  17  \\
     0     &  0   &     0  &      0  &      1  &     0     &   0
\end{array}
\begin{array}{rrrrrrr}
  20      &  0    &   16     &   4    &    0     &   0     &   0   \\
   0      &  1    &   16     &   0    &    0     &   4     &   0   \\
   6      &  0    &    0     &   0    &   13     &   8     &   0   \\
   0      &  0    &    8     &   8    &    0     &   0     &   4   \\
   0      &  1    &    2     &   0    &    1     &   0     &   3
\end{array}\right).
\]

$(n,N) = (5,15)$:
\[
A=\left(
\begin{array}{rrrrrrrr}
     1    &   0    &   0    &   0   &    0   &   12    &   0   &    4    \\
     0    &   1    &   0    &   0   &    0   &   17    &   0   &    0    \\
     0    &   0    &   1    &   0   &    0   &    0    &   1   &    8    \\
     0    &   0    &   0    &   1   &    0   &    0    &   3   &    0    \\
     0    &   0    &   0    &   0   &    1   &    0    &   0   &    0
\end{array}
\begin{array}{rrrrrrr}
    0     &    7    &     0     &   13     &    0    &     0    &     0     \\
    3     &    0    &    10     &    0     &    0    &     2    &     0     \\
    0     &    0    &     0     &    0     &   12    &    17    &     0     \\
    0     &    0    &     1     &   15     &    0    &     0    &     2     \\
    3     &    1    &     0     &    0     &   13    &     0    &    18
\end{array}\right).
\]

\section{Phase-retrievable subspaces}

We first prove the following special case.

\begin{prop} \label{prop-1.1}Let $\mathcal{F} =\{f_{i}\}_{i=1}^{n}$ be a basis for $H$. Then there exists a $k$-dimensional  maximal $\mathcal{F}$-PR subspace if and only if $1\leq k \leq [(n+1)/2]$.
\end{prop}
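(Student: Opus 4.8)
The plan is to reduce everything to the standard coordinate picture and then split the statement into a sharp dimension bound and an explicit construction. First I would pass to the dual basis $\{f_i^*\}$ of $\mathcal F$ and apply the invertible map sending $f_i^*\mapsto e_i$. Since $\{P_M f_i\}_{i=1}^n$ always spans $M$, it is a frame for the real space $M$, so by Proposition \ref{thm-1.1} it is phase-retrievable if and only if it has the complement property: for every $\Omega\subseteq\{1,\dots,n\}$, one of $\{P_M f_i\}_{i\in\Omega}$, $\{P_M f_i\}_{i\in\Omega^c}$ spans $M$. Because $\langle v,P_M f_i\rangle=\langle v,f_i\rangle$ for $v\in M$, the spanning of $\{P_M f_i\}_{i\in\Omega}$ is equivalent to $M\cap W_{\Omega^c}=\{0\}$, where $W_S=\mathrm{span}\{f_j^*:j\in S\}$. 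Both the $\mathcal F$-PR property and set inclusion (hence maximality) are preserved by $f_i^*\mapsto e_i$, so I may assume $\mathcal F$ is the standard orthonormal basis, $W_S$ is the coordinate subspace $C_S$, and supports are taken in the usual sense. The upshot is the clean reformulation: $M$ is $\mathcal F$-PR if and only if no two nonzero vectors of $M$ have disjoint supports.

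Next I would record two elementary facts in this language. For the support bound, if $M$ is $\mathcal F$-PR of dimension $d$ and $0\ne x\in M$, then setting $S=\mathrm{supp}(x)^c$ the vector $x$ lies in $M\cap C_{S^c}\ne\{0\}$, so the complement condition forces $M\cap C_S=\{0\}$; by dimension count this needs $d+|S|\le n$, whence $|\mathrm{supp}(x)|=n-|S|\ge d$. For the upper bound, if $d>[(n+1)/2]$ then $n\le 2d-2$, so $\{1,\dots,n\}$ splits into two blocks each of size $\le d-1$; neither block of $d-1$ vectors can span the $d$-dimensional $M$, so the complement property fails and no $\mathcal F$-PR subspace of dimension $d$ exists. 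In particular there is no maximal $\mathcal F$-PR subspace of dimension $>[(n+1)/2]$, which gives the necessity of the stated range ($k\ge 1$ being trivial).

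For sufficiency I would isolate a maximality criterion and then construct. The criterion is that \emph{a $k$-dimensional $\mathcal F$-PR subspace $M$ containing a vector $x_0$ with $|\mathrm{supp}(x_0)|=k$ is maximal}: any strictly larger $\mathcal F$-PR subspace $M'$ has $\dim M'\ge k+1$ and still contains $x_0$, contradicting the support bound applied to $M'$. It then remains to build, for each $1\le k\le[(n+1)/2]$ (equivalently $2k-1\le n$), a $k$-dimensional $\mathcal F$-PR subspace containing a vector of support exactly $k$. I would take a full-spark (MDS) $k$-dimensional subspace $M_0$ of the coordinate block $C_{\{1,\dots,2k-1\}}\cong\R^{2k-1}$ — for instance the row space of a generic $k\times(2k-1)$ matrix, whose $k\times k$ minors are all nonzero almost surely — and view it inside $\R^n$. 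Its minimum support equals $(2k-1)-k+1=k$, so $M_0$ contains a support-$k$ vector; and two nonzero vectors with disjoint supports would require supports of total size $\ge 2k>2k-1$ inside $\{1,\dots,2k-1\}$, which is impossible, so $M_0$ is $\mathcal F$-PR. By the criterion $M_0$ is a maximal $\mathcal F$-PR subspace of dimension $k$.

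The main obstacle is precisely the sufficiency at dimensions $k<[(n+1)/2]$: a generic $k$-dimensional subspace of $\R^n$ is already $\mathcal F$-PR but is typically \emph{not} maximal, because its minimum support $n-k+1$ strictly exceeds $k$ and it can be enlarged. The decisive idea is therefore to force a minimum-support vector into $M$, after which the support bound blocks every extension; confining the construction to the $(2k-1)$-coordinate block is exactly what produces such a vector while keeping the disjoint-support obstruction alive. I would finish by checking the boundary cases — $k=1$ (where $\mathrm{span}\{e_1\}$ is maximal, illustrating that maximal need not mean maximum) and $k=[(n+1)/2]$ (where a top-dimensional $\mathcal F$-PR subspace is maximal automatically) — to confirm that the range $1\le k\le[(n+1)/2]$ is attained exactly.
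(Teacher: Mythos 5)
Your proof is correct, and for the sufficiency direction it takes a genuinely different route from the paper's. The paper reverses the quantifiers: using the similarity transfer ($M$ is maximal $\mathcal{F}$-PR if and only if $(T^{t})^{-1}M$ is maximal $T\mathcal{F}$-PR), it fixes an \emph{arbitrary} $k$-dimensional subspace $M$, picks a PR frame $\{\varphi_{j}\}_{j=1}^{2k-1}$ for $M$ whose first $k$ vectors are orthonormal, extends them to an orthonormal basis $\{e_{i}\}$ of $H$, and defines the basis $u_{i}=e_{i}$ for $i\le k$ or $i\ge 2k$, $u_{i}=e_{i}+\varphi_{i}$ for $k<i\le 2k-1$; then $\{P_{M}u_{i}\}=\{\varphi_{1},\dots,\varphi_{2k-1},0,\dots,0\}$ is PR for $M$, and maximality is a direct count: for any unit $u\in M^{\perp}$ the projections of the $u_{i}$ onto $\tilde{M}=\mathrm{span}\{M,u\}$ reduce to at most $2k$ effective vectors, fewer than the $2k+1$ needed for phase retrieval in dimension $k+1$. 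You instead fix the basis (reducing to the standard one via the dual basis, the same transfer the paper records as observations (i)--(iii) before Theorem \ref{thm-egeneralcase}), recast the complement property as ``no two nonzero vectors of $M$ have disjoint supports,'' derive the support bound $|\mathrm{supp}(x)|\ge \dim M$ and the resulting maximality criterion, and then exhibit a generic MDS-type $k$-dimensional subspace of the coordinate block on $\{1,\dots,2k-1\}$, which contains a support-$k$ vector and hence is maximal. In effect you front-load the paper's Section 4: your support bound and maximality criterion are exactly Proposition \ref{prop-4.1} and Corollary \ref{corollary1} (proved there by essentially your argument), and your coordinate-block construction is a concrete, coding-theoretic substitute for the Baire-category extension argument of Theorem \ref{Thm-ext}. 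What each buys: the paper's route yields the stronger-sounding fact that \emph{every} $k$-dimensional subspace is maximal PR with respect to \emph{some} basis, while yours keeps the given basis, produces an explicit maximal subspace, and carries the sharp minimum-support information along for free. One caution: your motivational remark that a generic subspace with minimum support $n-k+1>k$ ``can be enlarged'' is precisely the nontrivial converse (the paper's Theorem \ref{thm-ext2}, valid only for $k<[(n+1)/2]$); your proof never uses it, but it should not be presented as obvious.
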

\begin{proof} Suppose that $M$ is a $k$-dimensional $\mathcal{F}$-PR subspace. Then we have that $n \geq 2k -1$ and hence $k \leq (n+1)/2$. For the other direction, note that for each invertible operator $T$ on $H$,  $M$ is  an maximal $\mathcal{F}$-PR subspace if and only if $(T^{t})^{-1}M$ is an maximal $T\mathcal{F}$-PR subspace. So it suffices to show that for each $k$-dimensional subspace $M$ with $1\leq k \leq [(n+1)/2]$ there exists a basis $\{u_{i}\}_{i=1}^{n}$ such that $M$ is an maximal PR subspace with respect to $\{u_{i}\}_{i=1}^{n}$.

Let  $\{\varphi_{j}\}_{j=1}^{2k-1} \subset M$ be a PR-frame for $M$. Without losing the generality we can assume that $\{\varphi_{1}, ... , \varphi_{k}\}$ is an orthonormal basis for $M$. Extend it to an orthonormal basis $\{e_{i}\}_{i=1}^{n}$ for $H$, where $e_{i} = \varphi_{i}$ for $i = 1, ... , k$. Define $u_{i}$ by
$$
u_{i} = e_{i} \ \ (i=1, .. , k, 2k, ... , n)  \ \ \text{and} \  u_{i} = e_{i} + \varphi_{i} \ \ (i =k+1, ... , 2k-1).
$$
Let $P_{M}$ be the orthogonal projection onto $M$. Clearly we have $$\{P_{M}u_{i}\}_{i=1}^{n} = \{\varphi_{1}, ... , \varphi_{2k-1}, 0, ... , 0\}, $$ and hence $\{u_{i}\}_{i=1}^{n}$ is a phase-retrievable for $M$. It is also easy to verify that $\{u_{i}\}_{i=1}^{n}$ is a basis for $H$.  Now we show that $M$ is an maximal PR  subspace with respect to $\{u_{1}, ..., u_{n}\}$. Let $\tilde{M} = \mathrm{span}\, \{M, u\}$ with $u = \sum_{j=k+1}^{n}a_{j}e_{j}$ in $ M^{\perp}$ and $||u|| = 1$.  Then $P_{\tilde{M}}u_{i}  = e_{i}$ for $1\leq i \leq k$, $ P_{\tilde{M}}u_{i}  = \varphi_{i} + a_{i}u$ for $k+1\leq i\leq 2k-1$ and $P_{\tilde{M}}u_{i}  = a_{i}u$ for $i \geq 2k-1$. If $a_{i} = 0$ for $i = 2k , ... , n$, then  $\{P_{\tilde{M}}u_{i}\}_{i=1}^{n}$ is not phase-retrievable for $\tilde{M}$ since it only contains at most $2k-1$ nonzero elements.  If $a_{i_{0}} \neq 0$ for some $i_{0} \geq 2k$, then clearly $\{P_{\tilde{M}}u_{i}\}_{i=1}^{n}$ is phase-retrievable for $\tilde{M}$ if and only if $\{P_{\tilde{M}}u_{i}\}_{i=1}^{2k-1}\cup \{a_{i_{0}}u\}$ is phase-retrievable for $\tilde{M}$. Thus  $\tilde{M}$ is not a PR subspace with respect to $\{u_{1}, ..., u_{n}\}$ since we need at least $2k+1$ number of elements in a phase-retrievable frame for the $(k+1)$-dimensional space $\tilde{M}$.
\end{proof}


Now lets consider the general frame case: Let $\mathcal{F}$ be a frame for $H$.  For each subset $\Lambda$ of $\{1,... , N\}$, let $$d_{\Lambda} = \max \{\dim \mathrm{span}\,  (\mathcal{F}_{\Lambda}), \dim \mathrm{span}\,  (\mathcal{F}_{\Lambda^{c}})\}. $$Define
$$
d(\mathcal{F}) = \min \{d_{\Lambda}: \Lambda \subset \{1, ... , N\}\}.
$$

\begin{theo} \label{main-thm-2} Let $\mathcal{F}$ be a frame for $H$. Then $k$ is the largest integer such that there exists a $k$-dimensional  maximal $\mathcal{F}$-PR subspace if and only if $k = d(\mathcal{F})$.
\end{theo}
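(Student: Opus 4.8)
The plan is to translate $\mathcal F$-phase-retrievability of a subspace $M$ into a geometric statement about how $M$ meets the orthogonal complements of the partial spans $\mathrm{span}\,\mathcal F_\Lambda$, and then to show separately that $d(\mathcal F)$ is both an upper bound for and an attained value of the dimension of an $\mathcal F$-PR subspace. Two preliminary observations organize everything. First, since every $\mathcal F$-PR subspace is contained in a maximal one of at least its dimension, the largest dimension occurring among maximal $\mathcal F$-PR subspaces equals the largest dimension $k^\ast$ occurring among all $\mathcal F$-PR subspaces; so it suffices to prove $k^\ast = d(\mathcal F)$. Second, for a subspace $M$ and an index set $\Lambda$ I would verify
\[
\{P_M f_i\}_{i\in\Lambda}\ \text{spans}\ M \iff M\cap(\mathrm{span}\,\mathcal F_\Lambda)^\perp = \{0\},
\]
which follows from $P_M(\mathrm{span}\,\mathcal F_\Lambda)=M \iff \mathrm{span}\,\mathcal F_\Lambda + M^\perp = H$ and passing to orthogonal complements. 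Writing $W_\Lambda := (\mathrm{span}\,\mathcal F_\Lambda)^\perp$ and invoking Proposition~\ref{thm-1.1} (valid over $\R^n$), $M$ is $\mathcal F$-PR exactly when, for every $\Lambda$, at least one of $M\cap W_\Lambda$, $M\cap W_{\Lambda^c}$ is trivial (the spanning requirement is the special case $\Lambda=\{1,\dots,N\}$, where $W_\Lambda=\{0\}$). Since $\dim W_\Lambda = n-\dim\mathrm{span}\,\mathcal F_\Lambda$, the defining inequality $\max(\dim\mathrm{span}\,\mathcal F_\Lambda,\dim\mathrm{span}\,\mathcal F_{\Lambda^c})\ge d(\mathcal F)$ becomes $\min(\dim W_\Lambda,\dim W_{\Lambda^c})\le n-d(\mathcal F)$ for every $\Lambda$.

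For the upper bound $k^\ast\le d(\mathcal F)$: if $M$ is $\mathcal F$-PR of dimension $k$, then for any partition $\Lambda\cup\Lambda^c$ one of $\{P_M f_i\}_{i\in\Lambda}$, $\{P_M f_i\}_{i\in\Lambda^c}$ spans $M$; since the image of a span under $P_M$ cannot have larger dimension than the span, this forces $\dim\mathrm{span}\,\mathcal F_\Lambda\ge k$ or $\dim\mathrm{span}\,\mathcal F_{\Lambda^c}\ge k$, i.e.\ $d_\Lambda\ge k$. Minimizing over $\Lambda$ yields $d(\mathcal F)\ge k$, hence $d(\mathcal F)\ge k^\ast$.

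The reverse inequality—exhibiting an actual $d(\mathcal F)$-dimensional $\mathcal F$-PR subspace—is where the real work lies, and I expect it to be the main obstacle. Set $d=d(\mathcal F)$ and form the finite family $\mathcal W=\{W_\Lambda:\dim W_\Lambda\le n-d\}$. By the last observation, for every $\Lambda$ at least one of $W_\Lambda,W_{\Lambda^c}$ lies in $\mathcal W$, so it suffices to build a $d$-dimensional subspace $M$ with $M\cap W=\{0\}$ for all $W\in\mathcal W$: such an $M$ meets the complement-property criterion and is therefore $\mathcal F$-PR, and—having the maximal possible dimension—it is automatically maximal. The crux is then the following avoidance lemma: given finitely many subspaces of $\R^n$ each of dimension at most $n-d$, there is a $d$-dimensional subspace meeting all of them trivially.

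I would prove the lemma by constructing a basis $v_1,\dots,v_d$ one vector at a time. Suppose $v_1,\dots,v_t$ have been chosen so that $M_t:=\mathrm{span}\,\{v_1,\dots,v_t\}$ satisfies $M_t\cap W=\{0\}$ for every $W\in\mathcal W$. If some nonzero vector of $\mathrm{span}\,\{v_1,\dots,v_{t+1}\}$ lay in $W$, its $v_{t+1}$-coefficient would have to be nonzero (else it lies in $M_t\cap W=\{0\}$), forcing $v_{t+1}\in W+M_t$; this is a subspace of dimension at most $(n-d)+t<n$. Since $\R$ is infinite, the finite union $\bigcup_{W\in\mathcal W}(W+M_t)$ cannot exhaust $\R^n$ (and it already contains $M_t$, since $\{0\}\in\mathcal W$, so the choice keeps the $v_i$ independent), and a valid $v_{t+1}$ exists. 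After $d$ steps this produces the desired $M$, giving $k^\ast\ge d(\mathcal F)$ and, together with the upper bound, $k^\ast=d(\mathcal F)$.
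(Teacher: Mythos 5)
Your proof is correct, and although it shares the unavoidable backbone with the paper's argument---Proposition \ref{thm-1.1} applied to the projected frame $\{P_M f_i\}$, with the upper bound $\dim M \le d(\mathcal{F})$ obtained, as in the paper, from the necessity of the complement property together with the fact that an orthogonal projection cannot increase the dimension of a span---the existence half runs on a genuinely different engine. The paper chooses a generic $k$-tuple $(x_1,\dots,x_k)\in H^{(k)}$: for each partition it forms the set $S_{\Omega_\Lambda}$ of tuples $X$ for which the operator $T_X$ maps $H_{\Omega_\Lambda}$ onto a $k$-dimensional space, shows each such set is open and dense via a nonvanishing-determinant argument, intersects the finitely many of them, and then invokes Lemma \ref{lem2.1} to convert the rank condition on $T_X$ into the statement that $\{P_M f_i\}_{i\in\Omega_\Lambda}$ spans $M$. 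You instead dualize the spanning condition---$\{P_M f_i\}_{i\in\Lambda}$ spans $M$ if and only if $M\cap(\mathrm{span}\,\mathcal{F}_\Lambda)^{\perp}=\{0\}$---which reduces everything to a purely linear-algebraic avoidance lemma: a $d$-dimensional subspace meeting finitely many subspaces of dimension at most $n-d$ trivially exists, built greedily by choosing each new basis vector outside a finite union of proper subspaces of $\R^n$. Your route is more elementary (no genericity, no rank lemma, no polynomial argument; it would work verbatim over any infinite field once the complement-property criterion is granted), and the dual reformulation via the spaces $W_\Lambda$ is cleaner than the paper's detour through $T_X$. What the paper's genericity argument buys in exchange is strictly more than the theorem: it shows the set of $\ell$-tuples ($\ell\le d(\mathcal{F})$) spanning an $\mathcal{F}$-PR subspace is open and dense in $H^{(\ell)}$, which the paper records as the corollary immediately following the proof; your deterministic construction produces one witness subspace but does not directly yield that density statement.
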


Clearly, $d(\mathcal{F}) = n$ if and only if $\mathcal{F}$ has the complement property. Thus the above theorem is a natural generalization of Proposition \ref{thm-1.1} . We need to following lemma for the proof of Theorem \ref{main-thm-2}.

\begin{lem} \label{lem2.1} Let $Tx = \sum_{i=1}^{k}\langle x, x_{k}\rangle x_{k}$ be a rank-$k$ operator and $M$ be a  subspace of $H$ such that $\dim TM = k$, then $\dim P(M) = k$, where $P$ is the orthogonal projection onto $\mathrm{span}\, \{x_{1}, ... , x_{k}\}$.
\end{lem}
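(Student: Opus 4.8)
The plan is to exploit a factorization of $T$ through the projection $P$. First I read the operator as $Tx=\sum_{i=1}^{k}\langle x, x_{i}\rangle x_{i}$ (the subscript in the statement should be $i$, not $k$, otherwise $T$ would be rank one). Written this way $T$ is a positive self-adjoint operator whose range is contained in $V:=\mathrm{span}\,\{x_{1},\dots,x_{k}\}$. Since $T$ has rank $k$, the vectors $x_{1},\dots,x_{k}$ must be linearly independent, so $\dim V=k$; together with $\mathrm{range}(T)\subseteq V$ this forces $\mathrm{range}(T)=V$. Moreover $\ker T=V^{\perp}$ (the kernel of a self-adjoint operator is the orthogonal complement of its range), so if $v\in V$ satisfies $Tv=0$ then $v\in V\cap V^{\perp}=\{0\}$; hence the restriction $T|_{V}\colon V\to V$ is injective and therefore an isomorphism of the $k$-dimensional space $V$.

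The key observation is that $T$ factors as $T=(T|_{V})\circ P$. Indeed, each $x_{i}$ lies in $V$, so $Px_{i}=x_{i}$ and consequently $\langle x, x_{i}\rangle=\langle Px, x_{i}\rangle$ for every $x\in H$. Substituting this into the defining formula gives $Tx=\sum_{i=1}^{k}\langle Px, x_{i}\rangle x_{i}=T(Px)$, and since $Px\in V$ the right-hand side is exactly $(T|_{V})(Px)$.

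With the factorization in hand the conclusion is immediate. Applying it to $M$ yields $TM=(T|_{V})(PM)$, where $PM\subseteq V$. Because $T|_{V}$ is an isomorphism of $V$ it preserves dimension, so $\dim PM=\dim (T|_{V})(PM)=\dim TM=k$, which is the desired equality.

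There is essentially no difficult step: the whole argument rests on recognizing the identity $T=(T|_{V})\circ P$, after which the rank hypothesis on $T$ and the invertibility of $T|_{V}$ do all the work. The only point needing a line of care is the invertibility of $T|_{V}$, which I obtain from $\ker T=V^{\perp}$ together with $\dim V=\rank(T)=k$.
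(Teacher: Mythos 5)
Your proof is correct and rests on the same key identity as the paper's: since each $x_i$ lies in $V=\mathrm{span}\,\{x_1,\dots,x_k\}$, one has $\langle x, x_i\rangle = \langle Px, x_i\rangle$, hence $TM = T(PM)$. The only difference is in the finish: the paper simply notes that $k=\dim T(PM)\leq \dim PM \leq \dim V = k$ (a linear map cannot increase dimension), whereas you prove the stronger fact that $T|_{V}$ is an isomorphism via self-adjointness — correct, but more machinery than the conclusion requires.
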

\begin{proof} Since $\langle x, x_{k}\rangle  = \langle Px, x_{k}\rangle$, we get that $range(T|_{M}) = range(T|_{PM})$. Thus $\dim P(M) \geq k$ and hence $\dim P(M) = k$.
\end{proof}
\vspace{3mm}

\noindent{\bfseries Proof of Theorem \ref{main-thm-2}.}  Clearly we only need to prove that if $d(\mathcal{F}) = k$, then there exists a $k$-dimensional $\mathcal{F}$-PR subspace and every $(k+1)$-dimensional subspace is not phase-retrievable with respect to $\mathcal{F}$.

Suppose that $M$ is a $(k+1)$-dimensional subspace of $H$ and it is also phase-retrievable with respect to $\mathcal{F}$. Then, by  Proposition \ref{thm-1.1}, we get that $d(P\mathcal{F})  = k+1$, and hence $d(\mathcal{F}) \geq d(P\mathcal{F}) \geq k+1$, which leads to a contradiction. Therefore every $(k+1)$-dimensional subspace is not phase-retrievable with respect to $\mathcal{F}$.

Next we show that there exists a $k$-dimensional $\mathcal{F}$-PR subspace. Let $\Omega$ be a subset of $\{1, ... , N\}$ be such that $\dim H_{\Omega} \geq k$, where $H_{\Omega} =\mathrm{span}\,  {\mathcal F}_{\Omega}$.  For $X = (x_1, ..., x_{k})\in H^{(k)}:=H\oplus ...  \oplus H$, define $T_{X}(z) = \sum_{i=1}^{k}\langle z, x_{k}\rangle x_{k}$.

Consider the following set
$$
S_{\Omega} = \{(x_{1}, ... , x_{k})\in H^{(k)}: \dim T_{X}(H_{\Omega}) = k\}.
$$

Since $\dim \mathrm{span}\,  {\mathcal F}_{\Omega}\geq k$, we get that there exists a linearly independent set $(f_{i_{1}}, ... , f_{i_{k}})$ in $\mathcal{F}_{\Omega}$. This implies that  $(f_{i_{1}}, ... , f_{i_{k}}) \in S_{\Omega}$  and hence $S_{\Omega}$ is not empty.

Moreover,  since $ \dim T_{X}(H_{\Omega}) = k$ if and only if there exists an $k\times k$ submatrix of  the $n\times |\Omega|$ matrix $[T_{X}f_{\omega}]$ whose determinant is a nonzero polynomial of the input variables $x_{1}, ... , x_{k}$, we obtain that  $S_{\Omega}$ is open dense in $H^{(k)}$.

Now for each subset $\Lambda $ in $\{1, ... , N\}$. Let $\Omega_{\Lambda} = \Lambda$ if $d_{\Lambda} = \dim \mathrm{span}\,  (\mathcal{F}_{\Lambda})$, and otherwise $\Omega_{\Lambda} = \Lambda^{c}$. Thus we have $\dim \mathrm{span}\, \mathcal{F}_{\Omega_{\Lambda}} \geq k$ for every subset $\Lambda$. Since each $S_{\Omega_{\Lambda}}$ is open dense in $H^{(k)}$, we get that $$S :=\bigcap_{\Lambda\subset \{1, ... , N\}}S_{\Omega_{\Lambda}}$$ is open dense in $H^{(k)}$. Let $X=(x_1, ..., x_{k}) \in S$ and $M = \mathrm{span}\,  \{x, ... , x_{k}\}$. Then by Lemma \ref{lem2.1} we obtain that $\dim P(H_{\Omega_{\Lambda}}) = k$. This implies that either $\dim \mathrm{span}\,  P\mathcal{F}_{\Lambda} = k$ or $\dim \mathrm{span}\,  P\mathcal{F}_{\Lambda^{c}} = k$ for each subset $\Lambda$. Hence $\{Pf_{j}\}_{j=1}^{N}$ is a frame for $M$ that has the complement property, which implies  by Proposition \ref{thm-1.1} that  $M$ is a $k$-dimensional $\mathcal{F}$-PR subspace. \qed

From the proof of Theorem \ref{main-thm-2}, we also have the following:

\begin{coro}  Let $\mathcal{F}$ be a  frame for $H$. Then for almost all the  vectors $(x_{1}, ... x_{\ell})$ in $H^{(\ell)}$ (here $\ell \leq  d(\mathcal{F})$, the subspace $\mathrm{span}\, \{x_{1}, ... , x_{\ell}\}$ is  phase-retrievable with respect to $\mathcal{F}$. More precisely, for each $\ell \leq d(\mathcal{F})$, the following set $$\{ (x_{1}, ... x_{\ell})\in H^{(\ell)}: \ \mathrm{span}\,  \{x_{1}, ... , x_{\ell}\}\  \text{ is} \ \text{phase retrievable with respect to}\  \mathcal{F}\}$$
is open dense in $H^{(\ell)}$.
\end{coro}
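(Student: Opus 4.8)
The plan is to reuse, essentially verbatim, the construction from the proof of Theorem~\ref{main-thm-2}, but carrying the free parameter $\ell$ in place of the specific value $d(\mathcal{F})$. The only feature of the target dimension actually used in that argument is that for every subset $\Lambda$ one can choose $\Omega_\Lambda \in \{\Lambda,\Lambda^c\}$ with $\dim\mathrm{span}\,\mathcal{F}_{\Omega_\Lambda}$ at least that dimension. Since $\ell \leq d(\mathcal{F}) = \min_\Lambda d_\Lambda$, we still have $\dim\mathrm{span}\,\mathcal{F}_{\Omega_\Lambda} = d_\Lambda \geq \ell$ for every $\Lambda$, so the selection of the $\Omega_\Lambda$ goes through unchanged.

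First I would, for $X = (x_1,\ldots,x_\ell)\in H^{(\ell)}$, set $T_X(z) = \sum_{i=1}^{\ell}\langle z,x_i\rangle x_i$ and define $S_{\Omega_\Lambda}^{(\ell)} = \{X : \dim T_X(H_{\Omega_\Lambda}) = \ell\}$ for each $\Lambda$. Exactly as in the theorem, the existence of $\ell$ linearly independent vectors inside $\mathcal{F}_{\Omega_\Lambda}$ (guaranteed by $\dim\mathrm{span}\,\mathcal{F}_{\Omega_\Lambda}\geq\ell$) shows $S_{\Omega_\Lambda}^{(\ell)}\neq\emptyset$, and the condition $\dim T_X(H_{\Omega_\Lambda}) = \ell$ is the non-vanishing of some $\ell\times\ell$ minor of the matrix $[T_X f_\omega]$, which is a polynomial in the coordinates of $X$; by Lemma~\ref{Lm:L1} each $S_{\Omega_\Lambda}^{(\ell)}$ is therefore open and dense in $H^{(\ell)}$.

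Next, since there are only finitely many subsets $\Lambda$, the intersection $S^{(\ell)} = \bigcap_\Lambda S_{\Omega_\Lambda}^{(\ell)}$ is again open and dense. For any $X\in S^{(\ell)}$ the operator $T_X$ has rank $\ell$ (its range lies in $\mathrm{span}\{x_1,\ldots,x_\ell\}$, yet already has dimension $\ell$), so $x_1,\ldots,x_\ell$ are linearly independent and $M := \mathrm{span}\{x_1,\ldots,x_\ell\}$ has dimension exactly $\ell$. Lemma~\ref{lem2.1} then gives $\dim P_M(H_{\Omega_\Lambda}) = \ell$ for every $\Lambda$, i.e. $P_M\mathcal{F}_\Lambda$ or $P_M\mathcal{F}_{\Lambda^c}$ spans $M$; hence $\{P_M f_j\}$ has the complement property and, by Proposition~\ref{thm-1.1}, $M$ is phase-retrievable with respect to $\mathcal{F}$. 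Thus $S^{(\ell)}$ is an open dense subset contained in the set of the corollary, which gives density at once. For openness I would argue on the full-rank locus $\{X : \dim M = \ell\}$, where phase-retrievability of $M$ is the conjunction, over the finitely many $\Lambda$, of the open conditions ``$\dim P_M(H_\Lambda)=\ell$ or $\dim P_M(H_{\Lambda^c})=\ell$''.

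The step I expect to need the most care is this last openness claim, since the rank functions depend on $X$ through the projection $P_M$, which itself varies with $X$, so one must check that these conditions are genuinely open rather than merely generic. The cleanest route is to translate everything, via Lemma~\ref{lem2.1}, back into the minors of $T_X$, which are manifestly continuous in $X$; then $S^{(\ell)}$ itself is the required open dense witness, and the only real content beyond Theorem~\ref{main-thm-2} is the observation that the inequality $\ell\leq d(\mathcal{F})$, rather than equality, already suffices to run the whole construction.
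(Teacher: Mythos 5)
Your proposal is correct and is essentially the paper's own argument: the paper gives no separate proof, stating only that the corollary follows ``from the proof of Theorem \ref{main-thm-2}'', and your rerun of that construction with $\ell$ in place of $d(\mathcal{F})$ --- observing that the only property used is $\dim \mathrm{span}\, \mathcal{F}_{\Omega_{\Lambda}} \geq \ell$, which the inequality $\ell \leq d(\mathcal{F})$ still guarantees --- is exactly what is meant. Your extra care on openness (arguing on the full-rank locus, where $P_{M}$ varies continuously and the complement-property conditions are finite conjunctions of open disjunctions) is in fact warranted and goes slightly beyond the paper, since the literal set in the statement need not be open at degenerate tuples (e.g.\ $(e_{1},e_{1})\in H^{(2)}$ for the standard basis of $\R^{3}$ lies in the set, yet $(e_{1}, e_{1}+\epsilon e_{2})$ does not), so what the argument really establishes is that the set contains an open dense subset, hence is dense.
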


The following lemma follows immediately from the definitions, and it tells us that it is enough to focus on maximal phase-retrievable subspaces for frames with the exact PR-redundancy property.

\begin{lem} \label{lem2.2} Let $\mathcal{F} = \{f_{i}\}_{i=1}^{N}$ be a frame for $H$, and $\Lambda \subset \{1, ... , N\}$. If $ker(\Theta_{L(\mathcal{F}_{\Lambda})})\cap \mathcal{S}_{2}  = ker(\Theta_{L(\mathcal F)})\cap \mathcal{S}_{2} ,$  then $M$ is a $\mathcal{F}$-PR subspace if and only if  it is a $\mathcal{F}_{\Lambda}$-PR subspace. Consequently, $M$ is an maximal  $\mathcal{F}$-PR subspace if and only if it is an maximal $\mathcal{F}_{\Lambda}$-PR subspace.
\end{lem}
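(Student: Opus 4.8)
The plan is to reduce the statement to a one-line set-theoretic identity between two kernels, after first recording the elementary reformulation of the $\mathcal{F}$-PR property in the same rank-one-lifting language that characterizes phase-retrievable frames. First I would observe that for every $x\in M$ and every $i$ we have $\langle x, P_{M}f_{i}\rangle = \langle P_{M}x, f_{i}\rangle = \langle x, f_{i}\rangle$, so the intensity measurements of a vector of $M$ against $\{P_{M}f_{i}\}$ coincide with its measurements against $\{f_{i}\}$. Consequently $M$ fails to be an $\mathcal{F}$-PR subspace exactly when there exist $x,y\in M$ with $|\langle x, f_{i}\rangle| = |\langle y, f_{i}\rangle|$ for all $i$ but $x\neq \pm y$. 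Setting $A = x\otimes x - y\otimes y$, the equality of all intensities is precisely $\langle Af_{i}, f_{i}\rangle = 0$ for every $i$, i.e. $A\in ker(\Theta_{L(\mathcal F)})$, while $x\neq\pm y$ is precisely $A\neq 0$ (since $x\otimes x = y\otimes y$ forces $x=\pm y$). Introducing $D_{M} = \{x\otimes x - y\otimes y:\, x,y\in M\}$, this gives the characterization that $M$ is an $\mathcal{F}$-PR subspace if and only if $ker(\Theta_{L(\mathcal F)})\cap D_{M} = \{0\}$, and likewise $M$ is an $\mathcal{F}_{\Lambda}$-PR subspace if and only if $ker(\Theta_{L(\mathcal{F}_{\Lambda})})\cap D_{M} = \{0\}$.

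Next, since every element of $D_{M}$ is Hermitian of rank at most two, we have $D_{M}\subseteq \mathcal{S}_{2}$. Intersecting the hypothesis $ker(\Theta_{L(\mathcal{F}_{\Lambda})})\cap \mathcal{S}_{2} = ker(\Theta_{L(\mathcal F)})\cap \mathcal{S}_{2}$ with $D_{M}$ then yields $ker(\Theta_{L(\mathcal{F}_{\Lambda})})\cap D_{M} = ker(\Theta_{L(\mathcal F)})\cap D_{M}$, because $\mathcal{S}_{2}\cap D_{M} = D_{M}$. Hence these two intersections reduce to $\{0\}$ simultaneously, and by the characterization of the previous paragraph $M$ is an $\mathcal{F}$-PR subspace if and only if it is an $\mathcal{F}_{\Lambda}$-PR subspace.

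Finally, the maximality assertion is immediate: we have just shown that $\mathcal{F}$ and $\mathcal{F}_{\Lambda}$ possess exactly the same family of phase-retrievable subspaces, and maximality is defined purely through inclusion inside this family, so a subspace is inclusion-maximal with respect to one frame if and only if it is inclusion-maximal with respect to the other. I do not anticipate a genuine obstacle; the whole content is concentrated in the first step, namely translating the $\mathcal{F}$-PR property of $M$ into the condition $ker(\Theta_{L(\mathcal F)})\cap D_{M}=\{0\}$, which is the subspace analogue of the lifting characterization of phase-retrievable frames. The one point deserving a careful line is the identity $\langle x, P_{M}f_{i}\rangle = \langle x, f_{i}\rangle$ for $x\in M$; once that is in hand, the remainder is the inclusion $D_{M}\subseteq \mathcal{S}_{2}$ together with the trivial intersection argument, which is exactly why the lemma is stated as following immediately from the definitions.
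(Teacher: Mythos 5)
Your proof is correct, and it is essentially the argument the paper has in mind: the paper gives no written proof (it asserts the lemma ``follows immediately from the definitions''), and your verification --- reformulating the $\mathcal{F}$-PR property of $M$ as $ker(\Theta_{L(\mathcal F)})\cap D_{M}=\{0\}$ via the identity $\langle x, P_{M}f_{i}\rangle=\langle x,f_{i}\rangle$ for $x\in M$, then using $D_{M}\subseteq\mathcal{S}_{2}$ to intersect the hypothesis with $D_{M}$ --- is exactly the intended ``immediate'' argument in the paper's own rank-one-lifting language. The maximality conclusion then follows, as you say, because the two frames have identical families of PR-subspaces.
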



Now we would like to know what are the possible values of $d(\mathcal{F})$. Since every frame contains a basis, we get by Proposition \ref{prop-1.1} that  $k\geq [\frac{n+1}{2}]$. The following theorem tells us that for every $k$ between $[(n+1)/2]$ and $n$, there is a frame $\mathcal{F}$ with the exact PR-redundancy property such that  $k = d(\mathcal{F})$.

\begin{theo} \label{main-thm-3} Let $H = \mathbb{R}^{n}$ and $k$ be an integer such that $n\geq k \geq [\frac{n+1}{2}]$. Then  for each $N$ between $ 2k-1$ and $ k(k+1)/2+ (n-k)(n-k+1)/2$, there exists a frame $\mathcal{F}$ of length $N$ such that it has the exact PR-redundancy property and  $d(\mathcal{F}) =k$, i.e., $k$ is the largest integer such that there exists a $k$-dimensional  maximal $\mathcal{F}$-PR subspace.
\end{theo}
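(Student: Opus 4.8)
The plan is to fix a splitting $\R^n = X \oplus Y$ with $\dim X = k$ and $\dim Y = m := n-k$, and to note first that $m \le k$ (this is exactly the hypothesis $k \ge [(n+1)/2]$). I would design every candidate frame $\mathcal{F}$ so that $X$ itself is the largest maximal $\mathcal{F}$-PR subspace; by Theorem~\ref{main-thm-2} it then suffices to check that $\{P_X f_i\}$ is phase-retrievable for $X$ (which forces $d(\mathcal{F}) \ge k$, since for every partition the complement property of the compression makes one block span $X$) together with an explicit partition of $\mathcal{F}$ into two blocks each spanning at most $k$ dimensions (which forces $d(\mathcal{F}) \le k$). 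The length $N$ is then tuned by two complementary constructions, one for the large-$N$ regime and one for the small-$N$ regime.

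For the large-$N$ regime I would take $\mathcal{F} = \mathcal{F}_1 \cup \mathcal{F}_2$ with $\mathcal{F}_1 \subseteq X$ an exact phase-retrievable frame for $X$ of length $N_1 \in [2k-1, k(k+1)/2]$ (these exist by Theorem~\ref{main-thm-1} applied to $X \cong \R^k$) and $\mathcal{F}_2 \subseteq Y$ merely a frame for $Y$ with the exact PR-redundancy property, of length $N_2 \in [m, m(m+1)/2]$. Lemma~\ref{lem-02} then yields exact PR-redundancy of $\mathcal{F}$; since $\mathcal{F}_2 \perp X$, the compression $\{P_X f_i\}$ equals $\mathcal{F}_1$ (together with zeros) and is PR for $X$, while the partition $\mathcal{F}_1 \mid \mathcal{F}_2$ has blocks of dimensions $k$ and $m \le k$, so $d(\mathcal{F}) = k$. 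This covers every $N \in [2k-1+m,\, k(k+1)/2 + m(m+1)/2]$. One ingredient I must supply is the existence of frames for $Y$ with exact PR-redundancy of every length $m \le N_2 \le m(m+1)/2$: for $N_2 \ge 2m-1$ these are exact PR frames from Theorem~\ref{main-thm-1}, and for $m \le N_2 \le 2m-2$ I would start from an orthonormal basis (whose removal of $e_j$ is detected by the rank-one element $e_j \otimes e_j \in \mathcal{S}_2$) and adjoin generic vectors, checking detectability of each removal with suitable rank-$\le 2$ matrices.

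For the small-$N$ regime, $N \in [\max(2k-1,n),\, k(k+1)/2]$, I would instead take $N$ vectors whose $X$-projections $\{P_X f_i\}$ form an exact PR frame for $X$, and tilt exactly $m$ of them by adding a basis of $Y$ while keeping the remaining $N-m \ge k$ vectors inside $X$ (chosen to span $X$). Then $\mathcal{F}$ spans $\R^n$, the compression is an exact PR frame for $X$, and the partition (the $m$ tilted vectors $\mid$ the remaining pure-$X$ vectors) again splits $\mathcal{F}$ into blocks of dimension $\le m \le k$ and $\le k$, so $d(\mathcal{F}) = k$. Exact PR-redundancy is the key point, and here it follows from the compression: removing any $f_j$ makes $\{P_X f_i\}_{i \ne j}$ fail phase-retrievability on $X$, so there are $u \ne \pm v$ in $X$ agreeing on all $P_X f_i$ with $i\ne j$ but differing at $j$; the matrix $W = u\otimes u - v\otimes v$, supported on $X$, then lies in $\ker \Theta_{L(\mathcal{F}\setminus f_j)} \cap \mathcal{S}_2$ but not in $\ker \Theta_{L(\mathcal{F})}$. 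Throughout, the genericity needed to realize the spanning and full-spark/rank conditions is guaranteed by Lemmas~\ref{Lm:L1}--\ref{Lm:L3}.

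Finally I would verify that the two regimes cover every integer $N$ in the target interval; the ranges $[\,\cdot\,,\,k(k+1)/2]$ and $[2k-1+m,\,\cdot\,]$ meet precisely when $k(k+1)/2 \ge 2k+m-2$, which fails only for the two exceptional pairs $(n,k) = (4,2)$ and $(6,3)$, each leaving a single length uncovered ($N=4$ and $N=7$). For $(4,2)$ a suitable basis of $\R^4$ works, since by Proposition~\ref{prop-1.1} a basis has $d = [(n+1)/2] = 2$ and its exact PR-redundancy is verified by rank-one elements of $\mathcal{S}_2$; for $(6,3)$ I would exhibit an explicit $7$-vector frame in the spirit of the examples above. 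The main obstacle is exactly this low end of the range: once $N < 2k-1+m$ one can no longer use a clean orthogonal direct sum, so frame vectors must simultaneously carry $X$- and $Y$-components, and one must reconcile the two competing demands — the balanced partition giving $d(\mathcal{F}) \le k$ and the detectability of every single removal giving exact PR-redundancy — which is where the rank-$\le 2$ analysis of $\ker\Theta_{L(\mathcal{F})} \cap \mathcal{S}_2$ and the genericity lemmas carry the real weight.
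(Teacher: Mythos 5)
Your two constructions are exactly the paper's: the small-$N$ ``tilting'' construction (compress an exact PR frame onto a $k$-dimensional $M$, tilt $m=n-k$ of its vectors by an orthonormal basis of $M^{\perp}$, keep the rest inside $M$) is the paper's Case (i) verbatim, down to the witness $A = x\otimes x - y\otimes y$ supported in $M$, and the direct-sum construction via Lemma \ref{lem-02} is the paper's Case (ii). Where you genuinely diverge is in the gluing arithmetic, and there your version is \emph{more} careful than the paper's. The paper claims that $k\geq n/2$ ``easily'' gives $k(k+1)/2 \geq (2k-1)+2(n-k)-1 = 2n-2$, so that the direct sums (which only reach down to length $(2k-1)+(2m-1)=2n-2$) start where Case (i) ends; this inequality is false for several admissible pairs, e.g.\ $(n,k)=(4,2)$ gives $3<6$ and $(n,k)=(6,3)$ gives $6<10$, so the paper's own proof leaves lengths uncovered precisely in the range you labor over. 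Your device of allowing $\mathcal{F}_{2}\subseteq Y$ to be any frame with the exact PR-redundancy property, of length as small as $m$, pushes the direct-sum range down to $2k-1+m$, and your identification of $(4,2)$ and $(6,3)$ as the only residual exceptions is correct (for $m\leq k$ the condition $k(k+1)/2\geq 2k+m-2$ fails only there); the basis fix for $(4,2)$ is sound, since $d=2$ by Proposition \ref{prop-1.1} and Theorem \ref{main-thm-2}, and removal of $f_{j}$ from a basis is detected by the rank-one matrix $f_{j}^{*}\otimes f_{j}^{*}$ built from the dual basis.

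Two points in your write-up are asserted rather than proved, and both are load-bearing as written. First, the existence of exact-PR-redundancy frames for $\R^{m}$ of the intermediate lengths $m< N_{2}\leq 2m-2$ (``basis plus generic vectors'') is exactly the kind of claim Lemma \ref{lem-00} warns is delicate: linear independence of the lifts $L(f_{i})$ does not imply exact PR-redundancy, so ``checking detectability with suitable rank-$\leq 2$ matrices'' is the whole problem, not a remark. Fortunately you can avoid these lengths entirely: taking $N_{2}\in\{m\}\cup[2m-1, m(m+1)/2]$ only, the two sum-intervals $[2k-1+m,\,k(k+1)/2+m]$ and $[2n-2,\,k(k+1)/2+m(m+1)/2]$ already overlap whenever $m\leq k$ (one needs $k(k+1)/2\geq 2k+m-3$, which holds for all admissible pairs), so the basis case plus Theorem \ref{main-thm-1} suffices. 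Second, for $(6,3)$ you promise a $7$-vector frame but never produce one, and neither of your regimes reaches $N=7$, so this is a genuine hole. It is easy to fill: $\mathcal{F}=\{e_{1},\ldots,e_{6},\,e_{1}+e_{2}+e_{3}\}$ works, since the partition $\{e_{1},e_{2},e_{3},e_{1}+e_{2}+e_{3}\}$ versus $\{e_{4},e_{5},e_{6}\}$ gives $d(\mathcal{F})\leq 3$, any $3$-dimensional PR subspace for the basis remains one after adjoining a vector (so $d(\mathcal{F})=3$), and every removal is detected inside $\mathcal{S}_{2}$: use $e_{j}\otimes e_{j}$ for $j\geq 4$, use $e_{1}\otimes e_{2}+e_{2}\otimes e_{1}$ for the last vector, and use $e_{j}\otimes e_{j}-\tfrac{1}{2}(e_{j}\otimes e_{l}+e_{l}\otimes e_{j})$ with $l\neq j$, $l\leq 3$, for $j\leq 3$. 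With these two repairs your argument is complete and, unlike the paper's, covers the small values of $n$ correctly.
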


Before giving the proof we remark  while the proof of the this theorem uses Theorem \ref{main-thm-1}, it is also a generalization of Theorem \ref{main-thm-1} since it clearly recovers Theorem \ref{main-thm-1} if we let $n=k$.

\begin{proof} Let $M$ be a $k$-dimensional subspace of $H$. For each

We divide the proof into two cases.

Case (i).  Assume that $2k-1\leq N \leq k(k+1)/2$.

By Theorem \ref{main-thm-1}, there exists an exact PR-frame $\mathcal{G} = \{g_{i}\}_{i=1}^{N}$ for $M$. Without losing the generality we can also assume that $\{g_{1}, ... , g_{k}\}$ is an orthonormal basis for $M$. Extend it to an orthonormal basis $\{e_{i}\}_{i=1}^{n}$ with $e_{1} = g_{1} , ... , e_{k} = g_{k}$. Let  $$\mathcal{F} = \{f_{i}\}_{i=1}^{N} = \{e_{1}, ... , e_{k}, g_{k+1} + e_{k+1}, ... , g_{n}+ e_{n}, g_{n+1}. ... , g_{N}\}.$$ Then it is a frame for $H$. Consider the  subset $\Lambda = \{1, ..., k, n+1, ... , N\}$ of $\{1, ... , N\}$. We have $\dim \mathrm{span}\,  \mathcal{F}_{\Lambda} = \dim  M = k$, and $\dim \mathrm{span}\,  \mathcal{F}_{\Lambda^{c}} \leq n - k$.  Note that from $k \geq [\frac{n+1}{2}]$ we get that $n-k\leq k$. Thus we have
$d(\mathcal{F}) \leq \max \{n-k, k\} = k$. On the other hand, it is easy to prove that $d(\mathcal{F}) \geq d(P_{M}\mathcal{F}) = d(\mathcal{G}) = k$, where $P_{M}$ is the orthogonal projection onto $M$. Therefore we have $d(\mathcal{F})  = k$.

Now we show  that $\mathcal{F}$ has the exact PR-redundancy property. If fact, if $\Lambda$ is a proper subset of $\{1, ... , N\}$, then $P_{M}\mathcal{F}_{\Lambda}$ is not a PR frame for $M$ since $P_{M}\mathcal{F} = \mathcal{G}$ is an exact PR-frame for $M$.  Therefore, there exists $x$ and $y$ in $M$ such that $|\langle x, P_{M}f_{i} \rangle | =
|\langle y, P_{M}f_{i}\rangle | $ for all $i\in \Lambda$ and  $ A= x\otimes x - y\otimes y \neq 0$.  Since $P_{M}\mathcal{F}$ is a PR-frame for $M$, we obtain that
$|\langle x, P_{M}f_{i}\rangle | \ne  |\langle y, P_{M}f_{i}\rangle | $ for some $i\in\Lambda^{c}$.  Note that $|\langle z, f_{i}\rangle  = \langle z, P_{M}f_{i}\rangle $ for every $z\in M$. Therefore, we have that $A\in ker \Theta_{L(\mathcal{F}_{\Lambda})}\cap S_{2}$ but $A\notin ker \Theta_{L(\mathcal{F})}\cap S_{2}$, and hence $ ker \Theta_{L(\mathcal{F}_{\Lambda})}\cap S_{2} \neq  ker \Theta_{L(\mathcal{F})}\cap S_{2}$ for any proper subset $\Lambda$. So $\mathcal{F}$  has the exact PR-redundancy property.

Case (ii): Assume that $k(k+1)/2< N \leq  k(k+1)/2+ (n-k)(n-k+1)/2$.

Since $k \geq [(n+1)/2] \geq n/2$, it is easy to verify that $$k(k+1)/2 \geq (2k-1) + 2(n-k)-1 = 2n-2. $$ Then we can write $N = N_{1} + N_{2}$ such that $$2k-1\leq N_{1} \leq k(k+1)/2 \ \ \text{and }\ 2(n-k)-1 \leq N_{2} \leq (n-k)(n-k+1)/2.$$

By Theorem \ref{main-thm-1}, there exist an exact PR-frame $\mathcal{F}_{1}$ of length $N_{1}$ for $M$ and an exact PR-frame $\mathcal{F}_{2}$ of length $N_{2}$ for the $M^{\perp}$. By Lemma \ref{lem-02},
we know that $\mathcal{F} = \mathcal{F}_{1}\cup \mathcal{F}_{2}$ is a frame of length $N$ with the exact PR-redundancy property.  Clearly $d(\mathcal{F}) \leq k$ since  $$\max \{\dim \mathrm{span}\,  \mathcal{F}_{1} , \dim \mathrm{span}\,  \mathcal{F}_{2}\} = k.$$ On the other hand, since $\mathcal{F}$ has a $k$-dimensional PR-subspace $M$,  we get from Theorem \ref{main-thm-2} that $d(\mathcal{F}) \geq k$. Thus we have $d(\mathcal{F}) = k$.
\end{proof}

The following example shows that $k(k+1)/2+ (n-k)(n-k+1)/2$ is not necessarily the upper bound of $N$ such that  there exists a frame $\mathcal{F} $ of length $N$ with the exact PR-redundancy property and $d(\mathcal{F}) = k$.

\begin{exam} Let $\{e_{1}, e_{2}, e_{3}\}$ be an orthonormal basis for $\R^{3}$. Consider the following frame
$$
\mathcal{F} = \{e_{1}, e_{2}, e_{3},  e_{1} + e_{2},  e_{1}+e_{2} + e_{3}\}.
$$
Then $k = d(\mathcal{F}) = 2$ and $5 > k(k+1)/2+ (3-k)(3-k+1)/2 =4$ . We can check that $\mathcal{F}$ has the exact PR-redundancy property.  Let $\mathcal{G}$ be the frame after removing an element $f$ from $\mathcal{F}$. Based on the following five cases, we can easily construct $A = x\otimes x - y\otimes y$  such that $A\neq 0$, $A\in ker \Theta_{L(\mathcal{G})}$ but $A\notin ker \Theta_{L(\mathcal{F})}$:

(i) $f = e_{1}$: Let $x = 2e_{1} = e_{2}$ and $y = 4e_{1} - e_{2}$.

(ii) $f = e_{2}$: Let $x = 2e_{2} + e_{1}$ and $y = 4e_{2} - e_{1}$.

(iii) $f = e_{3}$: Let $x = e_{1} + e_{3}$ and $y = e_{1} - 3e_{3}$.

(iv) $f = e_{1} + e_{2}$: Let $x = e_{1} + (e_{2}+e_{3})$ and $y = e_{1} -(e_{2}+e_{3})$

(v) $f = e_{1}+e_{2} + e_{3}$: Let $x = e_{1}+ e_{3}$ and $y = e_{1} - e_{3}$.
\end{exam}

\begin{prop}Let $H = \mathbb{R}^{n}$. Suppose that a frame $\mathcal{F}$ of length $N$ has the exact PR-redundancy property and $d(\mathcal{F}) <n$. Then $N < n(n+1)/2$.
\end{prop}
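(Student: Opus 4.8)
The plan is to argue by contraposition, exploiting the fact that Lemma~\ref{lem-00} has already pinned down the length ceiling. Since the exact PR-redundancy property forces $\{L(f_{i})\}_{i=1}^{N}$ to be linearly independent in $\mathbb{H}_{n}$, we automatically have $N\leq \dim\mathbb{H}_{n}=n(n+1)/2$. Thus to prove the strict inequality $N<n(n+1)/2$ under the hypothesis $d(\mathcal{F})<n$, it suffices to rule out the boundary case $N=n(n+1)/2$. Concretely, I would assume $N=n(n+1)/2$ and derive $d(\mathcal{F})=n$, contradicting $d(\mathcal{F})<n$.

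First I would upgrade linear independence to spanning. By Lemma~\ref{lem-00} the set $\{L(f_{i})\}_{i=1}^{N}$ is linearly independent, and when $N=n(n+1)/2=\dim\mathbb{H}_{n}$ these $N$ independent vectors must form a basis of $\mathbb{H}_{n}$; in particular $\mathrm{span}\,\{L(f_{i}):1\leq i\leq N\}=\mathbb{H}_{n}$. Next I would translate spanning into triviality of the relevant kernel. Because $\Theta_{L(\mathcal F)}(A)=(\langle A,L(f_{i})\rangle)_{i=1}^{N}$, the kernel $ker(\Theta_{L(\mathcal F)})$ is exactly the orthogonal complement, with respect to the Hilbert--Schmidt inner product, of $\mathrm{span}\,\{L(f_{i})\}$. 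Since that span is all of $\mathbb{H}_{n}$, we get $ker(\Theta_{L(\mathcal F)})=\{0\}$, and hence $ker(\Theta_{L(\mathcal F)})\cap\mathcal{S}_{2}=\{0\}$. By the rank-one lifting characterization of phase-retrievable frames stated earlier, this means $\mathcal{F}$ is phase-retrievable.

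Finally I would close the loop using the real-Hilbert-space theory. For $H=\mathbb{R}^{n}$, Proposition~\ref{thm-1.1} gives that phase-retrievability is equivalent to the complement property, which the paper records as equivalent to $d(\mathcal{F})=n$. Thus $N=n(n+1)/2$ forces $d(\mathcal{F})=n$, contradicting $d(\mathcal{F})<n$, so the boundary case is impossible and $N<n(n+1)/2$. I do not expect a genuine obstacle here, since the entire argument is a short chain of implications built on results already proved; the only point requiring care is the observation that maximal length $n(n+1)/2$ together with independence of the lifted operators automatically promotes $\{L(f_{i})\}$ to a spanning set of $\mathbb{H}_{n}$, which is precisely what collapses the kernel and makes $\mathcal{F}$ phase-retrievable.
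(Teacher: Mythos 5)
Your proposal is correct and follows essentially the same route as the paper: bound $N\leq n(n+1)/2$ via Lemma \ref{lem-00}, then rule out equality by noting that $N=n(n+1)/2$ forces $\{L(f_{i})\}$ to be a basis of $\mathbb{H}_{n}$, hence $\mathcal{F}$ is phase-retrievable and $d(\mathcal{F})=n$, a contradiction. The only difference is that you spell out the intermediate steps (kernel as Hilbert--Schmidt orthogonal complement, the rank-one lifting characterization, and the equivalence of the complement property with $d(\mathcal{F})=n$) that the paper leaves implicit.
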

\begin{proof} Since  $\mathcal{F}$ has the exact PR-redundancy, we get that $N \leq n(n+1)/2$. If $N = n(n+1)/2$, then, by Lemma \ref{lem-00}, $\{f_{i}\otimes f_{i}\}_{N}$ is linearly independent and hence a basis for  $\mathbb{H}_{n}$. This implies that $\mathcal{F}$ is phase-retrievable and so $d(\mathcal{F}) = n$. This contradiction shows that $N < n(n+1)/2$.
\end{proof}

\noindent{\bfseries Question.} Give an integer $k$ such that $n > k \geq [\frac{n+1}{2}]$. What is the least upper bound $N$ such that there exists a frame $\mathcal{F}$ of length $N$ which has  the exact PR-redundancy property and  $d(\mathcal{F}) =k$?



\section{Maximal Phase-Retrievable Subspaces with respect to bases}

Given a basis $\mathcal{F} =\{f_{1}, ... , f_{n}\}$.  We would like to have a better understanding about the maximal phase-retrievable subspaces with respect to $\mathcal{F}$. We will first focus on orthonormal bases  and then use the similarity to pass to general bases.

Now we assume that $\mathcal{E} =\{e_{1}, ... , e_{n}\}$ is an orthonormal basis for $\Bbb{R}^{n}$. By  Proposition \ref{prop-1.1}, we know that there exists a $k$-dimensional maximal  $\mathcal{E}$-PR subspace for ever integer $k$ with $1\leq k \leq [{n+1\over 2}]$. What more can we say about these  $k$-dimensional maximal  $\mathcal{E}$-PR subspaces? We  explore its connections with the support property of vectors in these subspaces.  Recall that for a vector $x =\sum_{i=1}^{n}\alpha_i e_i \in \mathbb{R}^n$, the  support of $x$ is defined by $\text{supp}_{\mathcal E}(x):= \{i \ | \alpha_i \neq 0\}$. We will also use $\text{supp}(x)$ to denote $\text{supp}_{\mathcal E}(x)$ if $\mathcal{E}$ is well understood in the statements,  and use $|\Lambda|$ to denote the cardinality of any set $\Lambda$.

\begin{prop} \label{prop-4.1}
Suppose that  $ M$ is a $k$-dimensional $\mathcal{E}-PR $ subspace. Then for any nonzero vector $x \in M$,  we have $|\text{supp}(x)| \geq k$.
\end{prop}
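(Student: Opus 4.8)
The plan is to argue by contradiction using the complement property characterization of phase-retrievability in the real case (Proposition~\ref{thm-1.1}). Suppose, for contradiction, that there is a nonzero vector $x\in M$ with $|\text{supp}(x)| = s < k$, and set $\Omega = \text{supp}(x) = \{i : \langle x, e_i\rangle \neq 0\}$, so that $|\Omega| = s < k = \dim M$.

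The one computation I would record first is the following translation between the projected frame and the coordinates of $\mathcal{E}$: for any $z\in M$ we have $P_M z = z$, and hence
\[
  \langle z, P_M e_i\rangle = \langle P_M z, e_i\rangle = \langle z, e_i\rangle .
\]
In other words, testing a vector of $M$ against the frame $\{P_M e_i\}$ recovers exactly its coordinates in the orthonormal basis $\mathcal{E}$. This is the bridge that converts a statement about supports into a statement about the spanning behavior of the projected frame.

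Since $M$ is an $\mathcal{E}$-PR subspace, $\{P_M e_i\}_{i=1}^{n}$ is phase-retrievable for $M$, and therefore by Proposition~\ref{thm-1.1} it has the complement property in $M$: for every index set, either the indexed subfamily or its complement spans $M$. I would apply this to the particular set $\Omega$ and show that \emph{both} halves fail. On one hand, $\{P_M e_i\}_{i\in\Omega}$ consists of only $s < k$ vectors, so it cannot span the $k$-dimensional space $M$. On the other hand, for every $i\in\Omega^{c}$ we have $\langle x, e_i\rangle = 0$, so by the identity above $\langle x, P_M e_i\rangle = 0$; thus the nonzero vector $x\in M$ is orthogonal to every member of $\{P_M e_i\}_{i\in\Omega^{c}}$, and this family spans at most the $(k-1)$-dimensional subspace $M\cap\{x\}^{\perp}$, hence does not span $M$ either. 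This contradicts the complement property, so no such $x$ can exist, giving $|\text{supp}(x)|\geq k$.

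The only delicate point — and it is minor — is the bookkeeping in the translation step: one must be careful that ``support equal to $\Omega$'' means precisely that $x$ is orthogonal (in $H$) to $e_i$ for $i\in\Omega^{c}$, and then invoke the projection identity to transfer this orthogonality to the frame vectors $P_M e_i$ inside $M$. Once that is in place, the contradiction is a pure dimension count and requires no further work.
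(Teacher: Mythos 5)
Your proof is correct and is essentially the paper's argument: both proceed by contradiction, apply the complement property (Proposition~\ref{thm-1.1}) to the partition of $\{1,\dots,n\}$ induced by $\mathrm{supp}(x)$, and observe that the support side fails to span $M$ by cardinality while the complementary side lies in a $(k-1)$-dimensional subspace. The only cosmetic difference is that the paper exhibits that subspace by completing $x$ to an orthonormal basis $\{x,y_1,\dots,y_{k-1}\}$ of $M$ and expanding each $P_M e_i$, whereas you identify it directly as $M\cap\{x\}^{\perp}$ via the identity $\langle z, P_M e_i\rangle = \langle z, e_i\rangle$ for $z\in M$.
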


\begin{proof} Assume to the contrary that  there exists a nonzero $x \in M$ with $|\text{supp}(x)|=j<k$. We may assume that $\|x\|=1$ and that $\text{supp}(x)=\{1,2,...,j\}$. Pick vectors $y_1,...,y_{k-1}$ in $M$ such that the set $\{x,y_1,...,y_{k-1}\}$ is an orthonormal basis for $M$. Then we have,

\begin{align*}
P_M(e_1) & = \langle e_1,x \rangle x + \langle e_1,y_1 \rangle y_1 + \cdots + \langle e_1,y_{k-1} \rangle y_{k-1}\\
P_M(e_2) & = \langle e_2,x \rangle x + \langle e_2,y_1 \rangle y_1 + \cdots + \langle e_2,y_{k-1} \rangle y_{k-1}\\
\vdots \\
P_M(e_j) &  = \langle e_j,x \rangle x + \langle e_j,y_1 \rangle y_1 + \cdots + \langle e_j,y_{k-1} \rangle y_{k-1}\\
P_M(e_{j+1}) & = \langle e_{j+1},y_1 \rangle y_1 + \cdots + \langle e_{j+1},y_{k-1} \rangle y_{k-1}\\
\vdots \\
P_M(e_n) & = \langle e_n,y_1 \rangle y_1 + \cdots + \langle e_n,y_{k-1} \rangle y_{k-1}.\\
\end{align*}
\noindent
The partition $\{P_M(e_1),...,P_M(e_j)\}$ and $\{P_M(e_{j+1}),...,P_M(e_n)\}$ does not have the complement property since the first set contains less than $k$ elements and the members of the second set are all contained in the $(k-1)$-dimensional subspace $\mathrm{span}\, \{y_1,...,y_{k-1}\}$. Thus $M$ is not  a $\mathcal{E}-PR$ subspace, which leads to a contradiction.
\end{proof}

\begin{coro} \label{corollary1}
If $M$ is a $k$-dimensional  $\mathcal{E}-PR$ subspace and there exists $x \in M$ such that $|\text{supp}(x)|=k$, then $M$ is maximal.
\end{coro}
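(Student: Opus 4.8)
The plan is to argue by contradiction, and the whole proof reduces to a single application of Proposition \ref{prop-4.1} to a hypothetical larger phase-retrievable subspace. The point is that Proposition \ref{prop-4.1} gives a dimension-dependent lower bound on the support size of every nonzero vector in an $\mathcal{E}$-PR subspace, and the existence of a vector $x\in M$ with $|\text{supp}(x)|=k$ means $M$ already contains a vector whose support is as small as the bound permits. Any strict enlargement of $M$ would force the bound up, but $x$ would still be present and would violate it.

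First I would suppose that $M$ is \emph{not} maximal. By the definition of maximality, this means there exists an $\mathcal{E}$-PR subspace $\tilde{M}$ with $M\subsetneq \tilde{M}$. Writing $m=\dim\tilde{M}$, the strict inclusion $M\subsetneq\tilde{M}$ together with $\dim M = k$ forces $m\geq k+1$.

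Next I would apply Proposition \ref{prop-4.1} to the subspace $\tilde{M}$, which is an $m$-dimensional $\mathcal{E}$-PR subspace. The proposition then guarantees that every nonzero vector $y\in\tilde{M}$ satisfies $|\text{supp}(y)|\geq m\geq k+1$. In particular, since $x$ is a nonzero vector lying in $M\subseteq\tilde{M}$, we would conclude $|\text{supp}(x)|\geq k+1$. This contradicts the hypothesis that $|\text{supp}(x)|=k$. Hence no such $\tilde{M}$ can exist, and $M$ is maximal.

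There is no real obstacle here: the statement is an immediate corollary of Proposition \ref{prop-4.1}, and the only thing to be careful about is phrasing the contradiction in terms of the enlarged subspace rather than $M$ itself. The substantive content lives entirely in Proposition \ref{prop-4.1}; this corollary simply observes that a minimal-support witness obstructs any dimension increase.
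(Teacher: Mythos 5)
Your proof is correct and is exactly the argument the paper intends: the corollary is stated without proof as an immediate consequence of Proposition \ref{prop-4.1}, and your contradiction argument (any strictly larger $\mathcal{E}$-PR subspace $\tilde{M}$ would force $|\text{supp}(x)|\geq \dim\tilde{M}\geq k+1$ for the vector $x\in M\subseteq\tilde{M}$) is the standard way to spell it out. No gaps.
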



Now suppose that $k \leq [(n+1)/2]$. Let $x\in H$ be a vector of norm one and  $|\text{supp}(x)| = k$. We show that $x$ can be extended to an orthonormal set $\{x, u_{1}, ... , u_{k-1}\}$ such that $M = \mathrm{span}\, \{x, u_{1}, ... , u_{k-1}\}$ is a $k$-dimensional $\mathcal{E}$-PR subspace.

\begin{theo} \label{Thm-ext} Let $u_{1}\in\R^{n}$ be a unit vector such that $|\text{supp}(u_{1})|=k$ and $k \leq [(n+1)/2]$. Then $u_{1}$ can be extended to an orthonormal set $\{u_{1}, ... , u_{k}\}$ such that $M = \mathrm{span}\, \{u_{1}, ... , u_{k}\}$ is a $k$-dimensional maximal $\mathcal{E}$-PR subspace.
\end{theo}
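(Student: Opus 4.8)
The plan is to first reduce the maximality requirement to a pure phase-retrievability requirement, and then to produce the subspace by a genericity (dimension-counting) argument. Since $u_{1}\in M$ and $|\text{supp}(u_{1})|=k$, Corollary~\ref{corollary1} guarantees that \emph{any} $k$-dimensional $\mathcal{E}$-PR subspace $M$ containing $u_{1}$ is automatically maximal. Hence it suffices to exhibit a $k$-dimensional $\mathcal{E}$-PR subspace $M$ with $u_{1}\in M$; once such an $M$ is found, I simply apply Gram--Schmidt starting from the unit vector $u_{1}$ to obtain the desired orthonormal set $\{u_{1},\ldots,u_{k}\}$ spanning $M$.

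The key reformulation I would record first is a support-theoretic criterion for the $\mathcal{E}$-PR property. For $\Omega\subseteq\{1,\ldots,n\}$, the set $\{P_{M}e_{i}:i\in\Omega\}$ spans $M$ if and only if $M\cap\mathrm{span}\{e_{j}:j\notin\Omega\}=\{0\}$, because a nonzero $v\in M$ is orthogonal to every $P_{M}e_{i}$ with $i\in\Omega$ exactly when $\text{supp}(v)\subseteq\Omega^{c}$. Combining this with Proposition~\ref{thm-1.1} (complement property is equivalent to phase retrieval in the real case), $M$ is $\mathcal{E}$-PR if and only if there is \emph{no} partition $\{1,\ldots,n\}=A\sqcup B$ for which $M$ contains a nonzero vector supported in $A$ and a nonzero vector supported in $B$. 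This is the same mechanism already used in the proof of Proposition~\ref{prop-4.1}.

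I would then construct $M=\mathrm{span}\{u_{1},v_{2},\ldots,v_{k}\}$ by choosing $v_{2},\ldots,v_{k}$ generically. For a subset $S$, say that $M$ \emph{charges} $S$ if $M$ contains a nonzero vector supported in $S$; equivalently, the coordinate-restriction map $\phi_{S}\colon M\to\mathbb{R}^{S^{c}}$ is not injective. The main step is to show that for generic $v_{2},\ldots,v_{k}$ one has: $M$ charges $S$ only if $|S|\geq n-k+1$ or $\text{supp}(u_{1})\subseteq S$. Indeed, if $|S|\leq n-k$ and $\text{supp}(u_{1})\not\subseteq S$, then $\phi_{S}(u_{1})\neq 0$ and $|S^{c}|=n-|S|\geq k$, so the $k$ vectors $\phi_{S}(u_{1}),\phi_{S}(v_{2}),\ldots,\phi_{S}(v_{k})$ are linearly independent for generic $v_{j}$. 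The relevant $k\times k$ minor is a polynomial in the entries of the $v_{j}$ that is not identically zero, since $\phi_{S}$ maps $\mathbb{R}^{n}$ onto $\mathbb{R}^{S^{c}}$ and hence $\phi_{S}(u_{1})$ can be completed to a basis of $\mathbb{R}^{S^{c}}$ by a suitable choice of the $\phi_{S}(v_{j})$. Intersecting these finitely many open dense conditions, one for each admissible $S$, yields a generic choice; for such $M$ the vectors $u_{1},v_{2},\ldots,v_{k}$ are independent, so $\dim M=k$.

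Finally I would rule out bad partitions using the hypothesis $k\leq[(n+1)/2]$. Suppose $A\sqcup B=\{1,\ldots,n\}$ with both $A$ and $B$ charged. By the previous step each charged set is either \emph{large}, meaning $|S|\geq n-k+1$, or contains $\text{supp}(u_{1})$, in which case $|S|\geq k$. Since $A,B$ are disjoint they cannot both contain $\text{supp}(u_{1})$; if one is large and the other contains $\text{supp}(u_{1})$, then $|A|+|B|\geq(n-k+1)+k=n+1>n$, a contradiction; and if both are large, then $n=|A|+|B|\geq 2(n-k+1)$, forcing $k\geq(n+2)/2$, which contradicts $k\leq[(n+1)/2]<(n+2)/2$. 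Hence no bad partition exists, $M$ is $\mathcal{E}$-PR, and the conclusion follows. I expect the genericity bookkeeping of the third paragraph, namely verifying that each relevant minor is a nontrivial polynomial and that only finitely many subsets $S$ must be controlled, to be the main technical obstacle; the remaining steps are dimension counting and the elementary cardinality comparison in the last paragraph.
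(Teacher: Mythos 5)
Your proposal is correct, and strategically it is the same proof as the paper's: maximality comes for free from Corollary~\ref{corollary1} because $u_{1}\in M$ has support of size $k$; phase-retrievability is reduced via Proposition~\ref{thm-1.1} to a finite family of generic rank conditions indexed by index sets meeting $\mathrm{supp}(u_{1})$; and the hypothesis $k\leq [(n+1)/2]$ enters only through the same cardinality count at the end. The difference is in execution. The paper constructs the orthonormal set $\{u_{1},\ldots,u_{k}\}$ inductively: at stage $m$ it picks $u_{m+1}$ from the intersection of the sets $\Omega_{\Lambda}$ of vectors $u$ in the orthogonal complement $U=\mathrm{span}\,\{u_{1},\ldots,u_{m}\}^{\perp}$ for which $A_{\Lambda}(u_{1},\ldots,u_{m},u)$ is invertible, with $\Lambda$ ranging over subsets meeting $\mathrm{supp}(u_{1})$, and each $\Omega_{\Lambda}$ must be shown open and dense \emph{inside the subspace} $U$, which takes a separate nonemptiness argument. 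Your conditions are exactly dual to these: injectivity of the coordinate projection $\phi_{S}|_{M}$ for $|S|\leq n-k$ and $\mathrm{supp}(u_{1})\not\subseteq S$ is equivalent to invertibility of some $k\times k$ submatrix $A_{\Lambda}$ with $\Lambda\subseteq S^{c}$ meeting the support. But you impose them on $v_{2},\ldots,v_{k}$ all at once, as non-vanishing of finitely many polynomials on the full product space $(\R^{n})^{k-1}$, and you restore orthonormality only at the end by Gram--Schmidt --- which is legitimate, since the PR property and maximality depend only on the subspace $M$, not on the choice of spanning set. What your route buys is a cleaner genericity step (no induction, no density argument inside nested orthocomplements); what the paper's route buys is that the orthonormal vectors are produced directly, and its orthocomplement machinery is precisely what gets reused later in the proof of Theorem~\ref{thm-ext2}, where one must extend an already-given orthonormal basis of $M$ by one further vector.
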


\begin{proof} We can assume that $\{e_{1}, ... , e_{n}\}$ is the standard orthonormal basis for $\R^{n}$ and $u_{1} = \sum_{i=1}^{k}\alpha_{i}u_{i}$ such that $\alpha_{i} \neq 0$ for every $1\leq i\leq k$.

It is easy to observe the following fact:    Let $m: 1\leq m\leq k$. Suppose that  $\{u_{1}, ... , u_{m}\}$ is an orthonormal set extension of $u_{1}$ and $$A(u_{1}, ... , u_{m}) = [u_{1}, ... , u_{m}].$$ Let $A_{\Lambda}(u_{1}, ... , u_{m})$ be the matrix consisting of row vectors corresponding to $\Lambda$.  If $A_{\Lambda}(u_{1}, ... , u_{m})$ is invertible  for every subset $\Lambda$ of $\{1, ... , n\}$ of cardinality $k$ with the property that $\Lambda\cap \{1, ... , k\} \neq \emptyset$, then the row vectors of $A(u_{1}, ... , u_{m})$ form a frame for $\R^{m}$ that has the complement property.

Now we use the induction to show that such an matrix $A(u_{1}, ... , u_{m})$ exists for every $m \in\{1, ... , k\}$. Clearly,  the $n\times 1$ matrix $A (u_{1})$  satisfies the requirement. Now assume that such an $n\times m$ matrix  $A(u_{1}, ... , u_{m})$ has been constructed and $m< k$. We want to prove that there exists a unit vector $u_{m+1}\perp u_{i} (1\leq i\leq  m)$ such that
$A(u_{1}, ... , u_{m}, u_{m+1})$ has the required property.

Let  $U = \text{span}\{u_{1}, ... , u_{m}\}^{\perp}$, and
let $\Lambda$ be a subset of $\{1, ... ,n\}$ such that $|\Lambda| = m+1$ and $\Lambda\cap \{1, ... , k\}\neq \emptyset$.  Define
$$\Omega_{\Lambda} = \{u\in U: A_{\Lambda}(u_{1}, ..., u_{m}, u) \ \text{is \ invertible}\}.$$
We claim that $\Omega_{\Lambda}$ is an open dense subset of $U$.

Using the fact that the set of invertible matrices form an open set in the space of all matrices, it is clear that $\Omega_{\Lambda}$ is open in $U$.

Now we show that $\Omega_{\Lambda} \neq \emptyset$. Let $\Lambda'$ be a subset of $\Lambda$ with cardinality $m$ and $\Lambda' \cap \{1, ... , k\} \neq \emptyset$. Then, by our induction assumption, we have that $A_{\Lambda'}(u_{1}, ... , u_{m})$ is invertible,  which implies that the $m$ column vectors of $A_{\Lambda}(u_{1}, ... , u_{m})$  form a  linearly independent set in the $m+1$ dimensional space $\R^{\Lambda} = \Pi_{i\in\Lambda}\R$.  Let $z\in \R^{m+1}$ be a nonzero vector such that it is orthogonal to all the column vectors of $A_{\Lambda}(u_{1}, ... , u_{m})$. Define
$u= (u_{1}, ... , u_{n})^{T}\in \R^{n}$ by letting $u_{i} = z_{i}$ for $i\in \Lambda$, and $0$ otherwise. Then $u\in U$ and hence $u\in \Omega_{\Lambda}$. Therefore we get that  $\Omega_{\Lambda}\neq\emptyset$.

For the density of $\Omega_{U}$, let $y\in U$ be an arbitrary vector and pick a vector $u\in \Omega_{\Lambda}$. Consider the vector $u_{t} = tu + (1-t)y\in U$ for $t\in  \R$.  Since
$A_{\Lambda}(u_{1}, ..., u_{m}, u)$ is invertible, we have that $det( A_{\Lambda}(u_{1}, ..., u_{m}, u_{t}))$ is a nonzero polynomial of $t$, and hence it is finitely many zeros. This implies that there exists a sequence $\{t_{j}\}$ such that $u_{t_{j}}\in \Omega_{\Lambda}$ and $\lim_{j\rightarrow \infty}t_{j} = 0$. Hence $u_{t_{j}} \rightarrow y$ and therefore $\Omega_{U}$ is dense in $U$.

By the Baire Category theorem we obtain that the intersection $\Omega$ of all such $\Omega_{\Lambda}$ is open dense in $Y$. Pick any $u_{m+1}\in\Omega$, then $A(u_{1}, ..., u_{m}, u_{m+1})$ has the required property. This completes the induction proof for the existence of  such an matrix $A  = [u_{1}, ... , u_{k}]$, where $\{u_{1}, ... , u_{k}\}$ is an orthonormal set extending the given vector $u_{1}$.

Write $u_{j} = (a_{1j}, a_{2j}, ... , a_{nj})^{T}$ for $1\leq j\leq k$. Let $M = {\text span}\{u_{1}, ... , u_{k}\}$ and $P$ be the orthogonal projection onto $M$. Then $$Pe_{i} = \sum_{j=}^{k}<e_{i}, u_{j}>u_{j} = \sum_{j=1}^{n}a_{ij}u_{j}$$ for all $1\leq i\leq n$. For every subset $\Lambda $ of $\{1, ... , n\}$, since  $\{u_{1}, ... , u_{k}\}$ is an orthonormal set, we have that $\{Pe_{j}: j\in \Lambda\}$  are linearly independent if and only if $A_{\Lambda}$ is invertible. Thus, $\{Pe_{i}\}_{i=1}^{n}$ has the complement property since the set of row vectors of $A$ has the complement property.
\end{proof}

\begin{rem} Note that from the proof it is easy to see  that the existence of such an matrix $A(u_{1}, ... , u_{k})$  does not require the condition $k\leq [(n+1)/2]$. However, the complement property of the row vectors for $\R^{k}$ does require this condition.
\end{rem}

We know that  if $M$ is a $k$-dimensional PR-subspace with respect to an orthonormal basis $\mathcal{E}$, then the condition $\min \{ |supp(x)| : 0\neq x \in M\} = k$  is sufficient for $M$ to be maximal. The following example show that this condition is not necessary in general. However, we will prove in Theorem \ref{thm-ext2} that it is indeed also necessary if $k < [{n+1\over 2}]$.

\begin{exam} There exists a $2$-dimensional maximal PR-subspace $M$ in $\R^{4}$ such that $|\text{supp}(x)| = 3$ for every nonzero $x\in M$. Indeed, let $\{e_{1},  e_{2}, e_{3}, e_{4}\}$ and orthonormal basis for $\R^{4}$ and  be  $M = \mathrm{span}\,  \{e_{1}+e_{2}+e_{3}, e_{1}- e_{2} + e_{4}\}$. Then it can be easily verified that $M$ is a PR-subspace and $ |\text{supp}(x)| = 3$ for every nonzero $x\in M$. It is clear that $M$ is maximal since there is no $3$-dimensional  PR-subspace with respect to $\{e_{1}, e_{2}, e_{3}, e_{4}\}$ in $\R^{4}$. It is interesting to note that if we view $M$ as a subspace in $R^{n}$ with $n\geq 5$, then $M$ is no longer maximal anymore.
\end{exam}

\begin{theo} \label{thm-ext2} Assume that $M = \mathrm{span}\, \{u_{1}, ... , u_{k}\}$ is a $k$-dimensional maximal PR-subspace with respect to $\{e_{1}, ...,  e_{n}\}$ and $k< [{n+1\over 2}]$. Then $k = \min \{ |supp(x)| : 0\neq x \in M\}$.
\end{theo}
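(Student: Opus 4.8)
The plan is to prove the nontrivial inequality $\min\{|\mathrm{supp}(x)| : 0\neq x\in M\}\le k$; together with Proposition \ref{prop-4.1} this forces equality, since that proposition already gives $\ge k$. I would argue the contrapositive in its ``extension'' form: assuming \emph{every} nonzero $x\in M$ has $|\mathrm{supp}(x)|\ge k+1$, I build a $(k+1)$-dimensional $\mathcal{E}$-PR subspace containing $M$, contradicting maximality. Write $A=[u_1,\dots,u_k]$ for the $n\times k$ matrix whose columns form an orthonormal basis of $M$, and let $a_1,\dots,a_n\in\R^k$ be its rows, so that $P_M e_i$ has coordinate vector $a_i$. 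By Proposition \ref{thm-1.1}, $M$ being an $\mathcal{E}$-PR subspace is equivalent to $\{a_i\}_{i=1}^n$ having the complement property in $\R^k$. Since $k<[(n+1)/2]$ forces $n\ge 2k+1$, there is dimensional room for a $(k+1)$-dimensional PR subspace, and I look for one of the form $\tilde M=M\oplus\R u_{k+1}$ with $u_{k+1}$ a unit vector whose coordinate vector $b$ lies in $V:=M^{\perp}$.

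With $\tilde a_i=(a_i,b_i)\in\R^{k+1}$ the rows of $[\,A\mid b\,]$, the subspace $\tilde M$ is $\mathcal{E}$-PR exactly when $\{\tilde a_i\}$ has the complement property in $\R^{k+1}$. For a fixed partition $\{1,\dots,n\}=S\cup T$, the set of $b\in V$ for which \emph{both} $\{\tilde a_i\}_{i\in S}$ and $\{\tilde a_i\}_{i\in T}$ fail to span $\R^{k+1}$ is cut out by the simultaneous vanishing of $(k+1)\times(k+1)$ minors, hence is Zariski closed in $V\cong\R^{n-k}$. Thus it suffices to show that for each partition this bad set is \emph{proper}, i.e. that some $b\in V$ makes at least one of the two sides span $\R^{k+1}$; then the finitely many bad sets have measure zero and any $b$ in their dense complement yields the desired extension.

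The heart is this per-partition statement. First, for some $b\in V$ the family $\{\tilde a_i\}_{i\in S}$ spans $\R^{k+1}$ if and only if $\{a_i\}_{i\in S}$ spans $\R^k$ and $V\cap\R^{S}\neq\{0\}$, where $\R^{S}:=\{x\in\R^{n}:\mathrm{supp}(x)\subseteq S\}$ and $\pi_S$ denotes projection onto the coordinates in $S$. Indeed, $\{a_i\}_{i\in S}$ spanning makes the columns of $A_S$ independent, and appending $b_S$ keeps the rank equal to $k+1$ for some $b\in V$ precisely when $\pi_S(V)\not\subseteq\pi_S(M)$; since $V=M^{\perp}$, the inclusion $\pi_S(V)\subseteq\pi_S(M)$ is equivalent to $V\subseteq M+\R^{T}$, and taking orthogonal complements (using $(\R^T)^{\perp}=\R^S$) to $V\cap\R^{S}=\{0\}$. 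When $\{a_i\}_{i\in S}$ spans $\R^k$, the condition $V\cap\R^{S}\neq\{0\}$ merely says $\{a_i\}_{i\in S}$ is linearly dependent, i.e. $|S|>k$. Now for any partition, use the complement property to pick a side $S$ spanning $\R^k$. If $|S|>k$ we are done. If $|S|=k$, then $T$ must \emph{also} span $\R^k$: otherwise $\{a_i\}_{i\in T}$ lies in some hyperplane $c^{\perp}$, whence the nonzero vector $Ac\in M$ is supported in $S$ and satisfies $|\mathrm{supp}(Ac)|\le|S|=k$, contradicting the hypothesis that every nonzero vector of $M$ has support $\ge k+1$; and then $|T|=n-k\ge k+1>k$ by $n\ge 2k+1$, so by symmetry the side $T$ works.

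Assembling these, a single $b\in V$ works for all partitions, so $\tilde M$ is a $(k+1)$-dimensional $\mathcal{E}$-PR subspace properly containing $M$, contradicting maximality; hence a vector of support exactly $k$ must exist and $k=\min\{|\mathrm{supp}(x)|:0\neq x\in M\}$. I expect the main obstacle to be exactly this per-partition existence of a good $b$, and within it the borderline case where the spanning side has only $k$ elements: this is precisely where both the support hypothesis and the strict inequality $k<[(n+1)/2]$ (equivalently $n\ge 2k+1$) are indispensable. The earlier $\R^4$ example, in which $k=2=[(n+1)/2]$ and a blocking partition with both sides of size $k$ persists for every $b$, confirms that the conclusion genuinely fails once this slack disappears.
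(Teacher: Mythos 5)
Your proposal is correct and is essentially the paper's own argument: both reduce via Proposition \ref{prop-4.1} to exhibiting a nonzero vector of support at most $k$, assume the contrary, and then extend $M$ by a generic vector $b\in M^{\perp}$, checking partition by partition --- using the complement property of the rows of $[u_{1},\dots,u_{k}]$, the support hypothesis to handle the borderline case of a spanning side with exactly $k$ elements, and $n\geq 2k+1$ --- that the enlarged row system retains the complement property, so that $M\oplus\R b$ is a $(k+1)$-dimensional $\mathcal{E}$-PR subspace contradicting maximality. The remaining differences are cosmetic: you discard a finite union of proper Zariski-closed (hence measure-zero) bad sets where the paper intersects open dense sets via Baire category, and your duality criterion $V\cap\R^{S}\neq\{0\}$ is exactly the fact the paper uses when it directly constructs the extending vector supported in $S(\Lambda)$.
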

\begin{proof} By Proposition \ref{prop-4.1}, it suffices to show there is a nonzero vector $x\in M$ such that $|supp(x)| \leq k$.

 Let $\{u_{1}, ... , u_{k}\}$ be an orthonormal basis for $M$. We adopt the notation used in the proof of Theorem \ref{Thm-ext}: For every subset $\Lambda$ of $\{1, ... , n\}$, let $A_{\Lambda}(u_{1}, ... , u_{k})$ be the matrix consisting of row vectors of $[u_{1}, ... , u_{k}]$ corresponding to the row index in $\Lambda$. It is obvious that if there is a subset $\Lambda$ with $|\Lambda| = n-k$ such that $\rank A_{\Lambda}(u_{1}, ... , u_{k}) < k$, then there is a nonzero vector $x\in M$ such that $supp(x) \subseteq \Lambda^{c}$ and hence  $|supp(x)| \leq k$. We will prove that such a subset $\Lambda$ exists.

 Assume, to the contrary, that $\rank A_{\Lambda}(u_{1}, ... , u_{k}) =  k$ for  any subset $\Lambda$ with $|\Lambda| = n-k$. Thus we have $\rank A_{\Lambda}(u_{1}, ... , u_{k}) =  k$ for  any subset $\Lambda$ with $|\Lambda| \geq n-k$.

 For each subset $\Lambda$, since $k< [{n+1\over 2}]$, we only have three possible cases:

 (i) $|\Lambda| \geq n-k$ and $|\Lambda^{c}| < n -k$.

 (ii) $|\Lambda^{c}| \geq n-k$ and $|\Lambda| < n -k$.

 (iii) $|\Lambda| < n-k$ and $|\Lambda^{c}| < n -k$.

 Note that  case (iii) implies that $|\Lambda| > k$ and $|\Lambda^{c}| > k$. Now we  assign each $\Lambda$ to a subset  $S(\Lambda)$ by the following rule:
 Set $S(\Lambda)$ to be $ \Lambda$ or $\Lambda^{c}$ depending case (i) or case (ii). Suppose that $\Lambda$ satisfies $(iii)$. Since the row vectors of $[u_{1}, ... , u_{k}]$ has the complement property, we have that either $\rank A_{\Lambda}(u_{1}, ... , u_{k}) =  k$ or $\rank A_{\Lambda^{c}}(u_{1}, ... , u_{k}) =  k$. In this case we set $S(\Lambda) = \Lambda$ if $\rank A_{\Lambda}(u_{1}, ... , u_{k}) =  k$, and otherwise set $S(\Lambda) = A^{c}$. Let $$\mathcal{S} = \big\{S(\Lambda): \Lambda \subseteq \{1, ... , n\}\big\}.$$
 Then for each $\Lambda$  we have either $S(\Lambda) = \Lambda$ or $S(\Lambda) = \Lambda^{c}$,   $\rank A_{S(\Lambda)}(u_{1}, ..., u_{k}) = k$ and $|S(\Lambda)|\geq k+1$.

Let  $U = \text{span}\{u_{1}, ... , u_{k}\}^{\perp}$ and $$\Omega_{\Lambda} = \{u\in U: \rank A_{S(\Lambda)}(u_{1}, ... u_{k}, u) = k+1\}.$$ Then by the exact same argument as in the proof of Theorem \ref{Thm-ext}, we get that $\Omega_{\Lambda}$ is open dense in $U$. The Baire-Category theorem  implies that there exists unit vector $u_{k+1}\in U$ such that
$\rank A_{S(\Lambda)}(u_{1}, ... u_{k}, u_{k+1}) = k+1$ for every subset $\Lambda\subseteq \{1, ... , n\}$. This shows that the row vectors of the matrix $[u_{1}, ... , u_{k}, u_{k+1}]$ has the complementary property, and hence $span\{u_{1}, .., u_{k}, u_{k+1}\}$ is a PR-subspace with respect to the orthonormal basis $\{e_{1}, ... , e_{n}\}$, which contradicts the maximality of $M$.
\end{proof}

\begin{exam}
Let $\mathcal{F}=\{e_1,...,e_n\}$ be an orthonormal basis for $\mathbb{R}^n$. Then $M = \mathrm{span}\, \{x\}$ be a one-dimensional  maximal $\mathcal{F}$-PR subspace if and only if $|\text{supp}(x)|=1$.
\end{exam}

\begin{exam} Let $x\in\R^{n}$ be a unit vector such that $|\text{supp}(x)|=2$ and  M be a $2$-dimensional subspace containing $x$. Then M is maximal $\mathcal{F}$-PR subspace if and only if there exists an orthonormal basis $\{x,y\}$ for M such that $y=y_1+y_2$ with
$0\neq y_1 \in \mathrm{span}\, \{e_i: i \in supp(x)\}$ and $0\neq y_2 \in \mathrm{span}\,  \{e_i: i \notin supp(x) \}$. Indeed, by Corollary \ref{corollary1}, it suffices to show that $M$ is a $\mathcal{F}-PR$ subspace. We may assume that $\text{supp}(x)=\{1,2\}$.  Then we have

\begin{align*}
P_M(e_1) & = \langle e_1,x \rangle x + \langle e_1, y_{1} \rangle y \\
P_M(e_2) & = \langle e_2,x \rangle x + \langle e_2, y_{1}  \rangle y \\
P_M(e_3) & = \langle e_3, y_{2} \rangle y \\
\vdots \\
P_M(e_n) & = \langle e_n, y_{2} \rangle y \\.
\end{align*}
Then it  is easy to check that $\{P_{M}e_{i}\}$ has the complement property if and only if $\{P_{M}e_{1}, P_{M}e_{2}\}$ are linearly independent, and $\langle e_i, y_{2} \rangle \neq 0$ for some $3\leq i\leq n$. This is in turn equivalent to the conditions that $y_{1}\neq 0$ and $y_{2} \neq 0$.
\end{exam}

Finally, let examine the general basis case. Let $\mathcal{F} = \{f_{1}, ..., f_{n}\}$ be a basis for $\R^{n}$, and $\mathcal{F^{*}} = \{f_{1}^{*}, ..., f_{n}^{*}\}$ be its dual basis. Let $T$ be the invertible matrix such that $f_{i} = Te_{i}$ for all $i$, where $\mathcal{E} = \{e_{1}, ... , e_{n}\}$ be the standard orthonormal basis for $\R^{n}$. We observe the following facts:

(i)  $M$ is a PR-subspace with respect to $\mathcal{F}$ if and only if $T^{t}M$ is a PR-subspace with respect to $\mathcal{E}$.

(ii) The dual basis $\mathcal{F}^{*} = \{(T^{-1})^{t}T^{-1}e_{i}\}_{i=1}^{n}$, i.e., $f_{i}^{*} = (T^{-1})^{t}T^{-1}e_{i}$.

(iii) The coordinate vector of $x$ with respect to the basis $\mathcal{F}^{*}$ is the same as the coordinate vector of $T^{t}x$ with respect to the basis $\mathcal{E}$.

Based on the above observations we summarize the main results of this section in the following theorem:

\begin{theo} \label{thm-egeneralcase} Let $\mathcal{F} = \{f_{1}, ..., f_{n}\}$ be a basis for $\R^{n}$, and $\mathcal{F}^{*}= \{f_{1}^{*}, ..., f_{n}^{*}\}$ be its dual basis. Then we have

(i) If $M$ is a $k$-dimensional PR-subspace with respect to $\mathcal{F}$, then $|\text{supp}_{\mathcal{F}^{*}}(x)| \geq k$ for any nonzero vector $x\in M$. Consequently,  $M$ is maximal if there exists a vector $x\in M$ such that $|\text{supp}_{\mathcal{F}^{*}}(x)| = k$.

(ii) For any vector $x\in\R^{n}$ such that  $|\text{supp}_{\mathcal{F}^{*}}(x)| = k$, there exists a $k$-dimensional maximal PR-subspace $M$ with respect to $\mathcal{F}$ such that $x\in M$.

(iii) If $k < [(n+1)/2]$ and $M$ is a $k$-dimensional PR-subspace with respect to $\mathcal{F}$, then $M$ is maximal if and only if  there exists a vector $x\in M$ such that
$|\text{supp}_{\mathcal{F}^{*}}(x)| = k$.
\end{theo}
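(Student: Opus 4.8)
The plan is to deduce all three parts from the orthonormal-basis results of this section---Proposition~\ref{prop-4.1}, Corollary~\ref{corollary1}, Theorem~\ref{Thm-ext} and Theorem~\ref{thm-ext2}---by transporting the whole problem through the invertible matrix $T$ with $f_i = Te_i$. The identity I would record first is $\langle z, f_i\rangle = \langle z, Te_i\rangle = \langle T^{t}z, e_i\rangle$, valid for all $z$ and all $i$. For $z$ in a subspace $M$ we have $\langle z, P_M f_i\rangle = \langle z, f_i\rangle$ (since $P_M z = z$), so taking absolute values gives $|\langle z, f_i\rangle| = |\langle T^{t}z, e_i\rangle|$; as $z$ ranges over $M$, the vector $w = T^{t}z$ ranges bijectively over $T^{t}M$, and the $\pm$ identification is respected because $T^{t}$ is linear. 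Hence $M$ is an $\mathcal{F}$-PR subspace if and only if $T^{t}M$ is an $\mathcal{E}$-PR subspace. Because $T^{t}$ is invertible, $\Phi(M) := T^{t}M$ is a dimension- and inclusion-preserving bijection on subspaces, so it restricts to an inclusion-preserving bijection between $\mathcal{F}$-PR and $\mathcal{E}$-PR subspaces; in particular $M$ is a maximal $\mathcal{F}$-PR subspace if and only if $\Phi(M)$ is a maximal $\mathcal{E}$-PR subspace. Finally I would record the support dictionary: since the $i$-th coordinate of $x$ in the dual basis is $\langle x, f_i\rangle = \langle T^{t}x, e_i\rangle$, which is the $i$-th coordinate of $T^{t}x$ in $\mathcal{E}$, we get $\text{supp}_{\mathcal{F}^{*}}(x) = \text{supp}_{\mathcal{E}}(T^{t}x)$ and thus $|\text{supp}_{\mathcal{F}^{*}}(x)| = |\text{supp}_{\mathcal{E}}(T^{t}x)|$.

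With this dictionary, each part becomes a direct transfer. For part~(i), if $M$ is a $k$-dimensional $\mathcal{F}$-PR subspace then $\Phi(M)$ is a $k$-dimensional $\mathcal{E}$-PR subspace, and Proposition~\ref{prop-4.1} gives $|\text{supp}_{\mathcal{E}}(T^{t}x)| \ge k$ for every nonzero $x \in M$, i.e. $|\text{supp}_{\mathcal{F}^{*}}(x)| \ge k$. If in addition some $x\in M$ has $|\text{supp}_{\mathcal{F}^{*}}(x)| = k$, then $T^{t}x \in \Phi(M)$ has $\mathcal{E}$-support of size $k$, so Corollary~\ref{corollary1} shows $\Phi(M)$ is maximal, whence $M$ is maximal by the bijection.

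For part~(ii), given $x$ with $|\text{supp}_{\mathcal{F}^{*}}(x)| = k$ (and $k \le [(n+1)/2]$, which is forced since no $k$-dimensional PR subspace of a basis exists otherwise), set $y = T^{t}x$; this is a nonzero vector with $|\text{supp}_{\mathcal{E}}(y)| = k$. Applying Theorem~\ref{Thm-ext} to $y/\|y\|$ produces a $k$-dimensional maximal $\mathcal{E}$-PR subspace $M'$ containing $y$, and then $M = \Phi^{-1}(M') = (T^{t})^{-1}M'$ is a $k$-dimensional maximal $\mathcal{F}$-PR subspace containing $x = (T^{t})^{-1}y$. For part~(iii) with $k < [(n+1)/2]$, the backward implication is exactly the ``consequently'' clause proved in part~(i); for the forward implication, maximality of $M$ makes $\Phi(M)$ a maximal $\mathcal{E}$-PR subspace, so Theorem~\ref{thm-ext2} supplies a nonzero $y \in \Phi(M)$ with $|\text{supp}_{\mathcal{E}}(y)| = k$, and $x = (T^{t})^{-1}y \in M$ then has $|\text{supp}_{\mathcal{F}^{*}}(x)| = k$.

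Once the dictionary is established the remaining work is bookkeeping, so I do not expect a deep obstacle; the point that most deserves care is the claim that $\Phi$ transports maximality, since maximality is a comparative property and one must check that $\Phi$ is an inclusion-preserving bijection between the two classes of PR-subspaces, not merely a map preserving the PR property. A secondary item to keep explicit is the dimension constraint $k \le [(n+1)/2]$ needed for parts~(ii) and~(iii) to be non-vacuous, which itself follows by transporting Proposition~\ref{prop-1.1} through $\Phi$.
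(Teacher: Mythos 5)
Your proof is correct and takes essentially the same route as the paper: the paper's own argument for Theorem \ref{thm-egeneralcase} consists precisely of the transport dictionary you establish (that $M\mapsto T^{t}M$ is an inclusion-preserving bijection carrying $\mathcal{F}$-PR subspaces to $\mathcal{E}$-PR subspaces, together with $\mathrm{supp}_{\mathcal{F}^{*}}(x)=\mathrm{supp}_{\mathcal{E}}(T^{t}x)$), after which it invokes Proposition \ref{prop-4.1}, Corollary \ref{corollary1}, Theorem \ref{Thm-ext} and Theorem \ref{thm-ext2} exactly as you do. You in fact supply more detail than the paper (which only lists the observations and states that the theorem follows), including the sensible remark that part (ii) implicitly requires $k\leq[(n+1)/2]$.
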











\end{document}